\theoremstyle{thmstyleone}%
 \newtheorem{thm}{Theorem}[section]
 \newtheorem{cor}[thm]{Corollary}
 \newtheorem{lem}[thm]{Lemma}
 \newtheorem{prop}[thm]{Proposition}
 \newtheorem{cond}[thm]{Condition}
\theoremstyle{thmstyletwo}%
 \theoremstyle{remark}
 \theoremstyle{example}
\theoremstyle{thmstylethree}%
 \def\ar{\!\!\!&}\def\nnm{\nonumber}
 \def\mbb{\mathbb}\def\mbf{\mathbf}
 \def\mrm{\mathrm}
 \def\beqlb{\begin{eqnarray}}\def\eeqlb{\end{eqnarray}}
 \def\beqnn{\begin{eqnarray*}}\def\eeqnn{\end{eqnarray*}}
 \def\d{{\mbox{\rm d}}}\def\e{{\mbox{\rm e}}}\def\p{\mathbf{P}}
 \def\eqref#1{{\rm(\ref{#1})}}
\begin{document}

\title[Wasserstein-type distances of two-type CBIRE-processes]{Wasserstein-type distances of two-type continuous-state branching processes in L\'{e}vy random environments}


\author[1]{\fnm{Chen} \sur{Shukai}}\email{skchen@fjnu.edu.cn}
\equalcont{These authors contributed equally to this work.}
\author[1]{\fnm{Fang} \sur{Rongjuan}}\email{fangrj@fjnu.edu.cn}
\equalcont{These authors contributed equally to this work.}
\author*[2]{\fnm{Zheng} \sur{Xiangqi}}\email{zhengxq@ecust.edu.cn}
\equalcont{These authors contributed equally to this work.}

\affil[1]{\orgdiv{College of Mathematics and Informatics}, \orgname{Fujian Normal University}, \orgaddress{ \city{Fuzhou}, \postcode{350007}, \country{People's Republic of China}}}


\affil*[2]{\orgdiv{School of Mathematics}, \orgname{East China University of Science and Technology}, \orgaddress{ \city{Shanghai}, \postcode{200237}, \country{People's Republic of China}}}


\abstract{Under natural conditions, we proved the exponential ergodicity in Wasserstein distance of two-type continuous-state branching processes in L\'evy random environments with immigration. Furthermore, we expressed accurately the parameters of the exponent. The coupling method and the conditioned branching property play an important role in the approach. Using the tool of superprocesses, the ergodicity in total variance distance is also proved.}

\keywords{exponential ergodicity,  Wasserstein distance, branching process, random environment, superprocess}


\pacs[MSC Classification]{60J25, 60J68,  60J80,  60J76}

\maketitle

\section{Introduction}\label{sec1}

The model of two-type continuous-state branching processes with immigration in L\'evy random environments(CBIRE-processes) was established by Qin and Zheng in \cite{qinzheng}. And the authors provided an equivalent condition for ergodicity in the $ L_1$-Wasserstein distance. Continuing the work, we are concerned in this paper with the convergence rate of the ergodicity in the $ L_1$-Wasserstein distance and the ergodicity in the total variance distance.

CBIRE-processes are derived from the model of classical continuous-state branching processes with immigration(CBI-processes). The ergodic property is an important topic in the study of CBI-processes. It was proved in \cite{li} that, for a subcritical CBI-process with branching mechanism $\phi$ strictly positive on $(0,\infty),$ the ergodicity holds if and only if
$$\int_0^z\frac{\psi(\lambda)}{\phi(\lambda)}\d \lambda<\infty,\quad \text{for\ some}\ z>0.$$
Here, $\psi$ is the immigration mechanism defined by
 \beqlb\label{psi}
\psi(\lambda) = h\lambda + \int_0^\infty ({1-\e^{-z\lambda}})n(\d z),
 \eeqlb
where $h\ge 0$ and $(1\wedge z) n(\d z)$ is a finite measure on $(0,\infty).$
This is equivalent to $\int_1^{\infty}\log(u)n(\d u)<\infty$ in the strictly subcritical case. Moreover, \cite{lima} proved the exponential ergodicity when the process is strictly subcritical and the immigration mechanism $\psi(\lambda)$ takes the particular form of $a\lambda,$ where $a$ is a positive constant. In fact, for a more general form of $\psi(\lambda)$ as \eqref{psi}, the conclusion is still valid, see pp.66-67 of \cite{li1}. And the proof used the method of coupling, which
 has been proved effective in the study of exponential ergodicity, see also \cite{wangfy,liwang,li2, wangj} for instance.

For processes in random environments, limited work has been done in the topic of ergodicity. The model of CBIRE-processes was established in 2018 by \cite{helixu}, also \cite{pardo}. For a CBIRE-process with branching mechanism $\phi,$  immigration mechanism $\psi$ and random environment $\xi,$ where $\{\xi(t):t\ge 0\}$ is a L\'{e}vy process, \cite{helixu} proved that, under some hypothesis on $\{\xi(t):t\ge 0\},$  the equivalent condition of ergodicity is $$\int_1^{\infty}\log(u)n(\d u)<\infty.$$
Recently, \cite{xu} studied its polynomial ergodicity when the branching mechanism is $\alpha$-stable in the distance of total variation. Furthermore, \cite{jin1} provided a sufficient condition for exponential ergodicity of CBIRE-processes in both the Wasserstein distance and the total variance distance. The above results are concerned with the one-dimensional case, whereas in this paper, we consider similar problems in the two-dimensional setting.

Our main results consist of two parts, the exponential ergodicity in the $ L_1$-Wasserstein distance and the ergodicity in the total variance distance. When proving the exponential ergodicity of two-type CBIRE-processes, the main difficulties lie in dealing with the random environment: because of the random environment, it is more challenging to construct the test function as in \cite{liwang}; and unlike the processes in \cite{li2}, the two-type CBIRE-processes no longer hold the branching property. Fortunately, owing to the conditioned branching property of the two-type CBIRE-processes, it is still possible to use a coupling similar to \cite{li2} after some adjustments. Moreover, lightened by the skew convolution semigroup of CBI-processes on p.66 of \cite{li}, we introduce a random skew convolution semigroup when constructing the processes with immigration in L\'evy random environments, which we have not seen in previous researches. When proving the ergodicity in the total variance distance, the absence of the corresponding Grey's condition in multi-dimensional cases is the most critical difficulty. We conquer it by linking the multi-dimensional cases to the single-dimensional case with the help of different spatial motions in the setting of Dawson-Watanabe superprocesses. Coincidentally, in the very recent work, the Grey's condition for multi-type CBI-processes was established in \cite{cha} by using the Lamperti representation instead of our tool of superprocesses. But the main idea of constructing the local projection is similar.

In the next section, we present our main results after some necessary reviews of the two-type CBIRE-processes and some basic knowledge on the Wasserstein distance. Theorem 2.1 gives a sufficient condition for the exponential ergodicity in $ L_1$-Wasserstein distance, and its exact expression of the parameters is provided in its proof. Theorem 2.2 provides a sufficient condition for the exponential ergodicity in the total variance distance. Section 3 and Section 4 are devoted to the proofs of  Theorem 2.1 and Theorem 2.2. Our method is strongly influenced by \cite{li2}. In the proofs, we make full use of the the conditioned branching property. 

\section{Preliminaries and main results}\label{sec2}

 The model of two-type CBIRE-processes can be seen as a combination of the two-type CBI-processes and the single-type CBIRE-processes. The branching and immigration mechanisms are inherited from the classical two-type CBI-processes, see pp.44-45 in \cite{li}. For ease of notation, we still use $\phi$ and $\psi$ in the two-dimensional version without confusion. Specifically, $b=(b_{ij})$ is a $(2\times2)$-matrix with
\beqlb\label{1213-1}
b_{12}+\int_{\mathbb{R}_+^2}z_2m_1(\d z)\le 0,\qquad b_{21}+\int_{\mathbb{R}_+^2}z_1m_2(\d z)\le 0,
\eeqlb
 where $m_1,m_2$ are $\sigma$-finite measures on $\mathbb{R}_+^2$ supported by $\mathbb{R}_+^2\setminus\{\mathbf{0}\}$ satisfying
\begin{eqnarray*}
 \int_{\mathbb{R}_+^2} (z_1\wedge z_1^2 + z_2)m_1(\d z) + \int_{\mathbb{R}_+^2} (z_2\wedge z_2^2 + z_1)m_2(\d z) < \infty.
 \end{eqnarray*}
The branching mechanism $\phi=(\phi_1,\phi_2)$ is a function from $\mathbb{R}_+^2$ to itself with the following representations,
\begin{equation}\label{phi 1'}
 \phi_1(\lambda) = b_{11}\lambda_1 + b_{12}\lambda_2 + c_1 \lambda_1^2 + \int_{\mathbb{R}_+^2} (\e^{-\langle\lambda,z\rangle} - 1 + \langle\lambda,z\rangle)m_1(\d z),
 \end{equation}
 \begin{equation}\label{phi 2'}
 \phi_2(\lambda) = b_{21}\lambda_1 + b_{22}\lambda_2 + c_2 \lambda_2^2 + \int_{\mathbb{R}_+^2} (\e^{-\langle\lambda,z\rangle} - 1 + \langle\lambda,z\rangle)m_2(\d z).
 \end{equation}
And we write $\psi$\ for the immigration mechanism. It is a function from\ $\mathbb{R}_+^2$\ to\ $\mathbb{R}_+$\ with representation
\begin{equation}\label{122003}
\psi(\lambda)=\langle h,\lambda\rangle + \int_{\mathbb{R}_+^2\setminus\{\mathbf{0}\}}(1-\e^{-\langle\lambda, z\rangle})n(\d z),\quad \lambda\in\mathbb{R}_+^2.
\end{equation}
In the above, $h=(h_1,h_2), c=(c_1,c_2)$ are constants in $\mathbb{R}^2_+$, and $n$ is a $\sigma$-finite measure on $\mathbb{R}_+^2$ supported by $\mathbb{R}_+^2\setminus\{\mathbf{0}\}$ satisfying$  \int_{\mathbb{R}_+^2}(1\wedge\| z\|)n(\d z )< \infty.$ The notation $\langle\cdot,\cdot\rangle$ stands for the inner product.

Let $(\Omega, \mathscr{F}, \mathscr{F}_t, \mbf{P})$ be a filtered probability space satisfying the usual hypothesis. The random environment is described by an $(\mathscr{F}_t)$-L\'evy process $\{\xi(t):0<t<\infty\}$  with $\xi(0)=0,$ whose L\'{e}vy-It\^{o} decomposition is given as follows:
\begin{equation}\label{191006xi}
  \xi(t) = at + \sigma W(t) + \int_0^t\int_{[-1,1]}z\tilde{N}(\d s,\d z) + \int_0^t\int_{[-1,1]^c}zN(\d s,\d z),\ t\ge 0,
\end{equation}
where $a \in \mathbb{R}$ and $\sigma \geq 0$ are given constants, $\{W_t: t \geq 0\}$ is an $(\mathscr{F}_t)$-Brownian motion and $N(\d s, \d z)$ is an $(\mathscr{F}_t)$-Poisson random measure on $(0, \infty)\times \mathbb{R}$ with intensity $\d s \nu(\d z)$ satisfying $\int_{(0,\infty)}(1\wedge z^2) \nu(\d z)<\infty.$ Similar to the treatment in the single-type model of CBIRE-processes \cite{helixu}, the environment can be extended to $\{\xi(t):-\infty<t<\infty\}.$

 Given an interval~$I\subset\mathbb{R},$~for~$t\in I,\lambda\in{\mathbb{R}_+^2},$~there exists~$r\mapsto u_{r,t}(\xi,\lambda)\in\mathbb{R}_+^2$~as the unique positive strong solution to
 \begin{equation}\label{191006u}
   u_{r,t}^{(i)}(\xi,\lambda) = \lambda_i - \int_r^t \e^{\xi(s)}\phi_i(e^{-\xi(s)}u_{s,t}(\xi,\lambda))\d s,\quad i=1,2,\quad r\in I\cap (-\infty,t].
 \end{equation}
Define~$(v_{r,t}(\xi,\cdot))_{t\ge r\in I}$ by
 \begin{equation}\label{191006v}
   v_{r,t}^{(i)}(\xi,\lambda_1,\lambda_2):=\e^{-\xi(r)}u_{r,t}^{(i)}(\xi,\e^{\xi(t)}\lambda_1, \e^{\xi(t)}\lambda_2),\quad \lambda=(\lambda_1,\lambda_2)\in\mathbb{R}_+^2,\quad i=1,2.
 \end{equation}
We call $u_{r,t}$ and $v_{r,t}$ random cumulant semigroups. Define a stochastic transition semigroup $Q_{r,t}^{\xi}(x,\d y)$ by
\begin{equation}\label{191007q}
   \int_{\mathbb{R}_+^2}\e^{-\langle\lambda,y\rangle}Q_{r,t}^{\xi}(x,\d y) = \exp\{-\langle x,v_{r,t}(\xi,\lambda)\rangle\}, \qquad \lambda,x \in \mathbb{R}_+^2.
 \end{equation}
It is easy to check that $Q_{r,t}^{\xi}(x,\cdot)$ has the branching property, namely,
\begin{equation}\label{122001}
Q_{r,t}^{\xi}(x+y,\cdot)=Q_{r,t}^{\xi}(x,\cdot)\ast Q_{r,t}^{\xi}(y,\cdot), r<t\in\mathbb{R}, x,y\in\mathbb{R}_+^2,
\end{equation}
where $\ast$ means the convolution between measures. And we write $Q_{t}^{\xi}(x,\cdot)$ for the special case of $r=0.$ Furthermore, for $t>0,$ define the transition semigroup $\bar{Q}_t(x,\cdot)$ by
\begin{equation}\label{qbar}
\int_{\mathbb{R}_+^2}e^{-\langle\lambda,y\rangle}\bar{Q}_t(x,\d y)=\mathbf{P}\Big[\exp\Big\{-\langle x,v_{0,t}(\xi,\lambda)\rangle-\int_0^t\psi(v_{s,t}(\xi,\lambda))\d s\Big\}\Big].
\end{equation}

In fact, $\bar{Q}_t(x,\cdot)$ is the transition semigroup of $\{X(t):t\ge 0\},$ which is a two-type CBIRE-process with branching mechanism $\phi,$ immigration semigroup $\psi,$ random environment $\xi$ and initial value $x\in\mathbb{R}_+^2.$

When $\eta(t)\equiv (0,0),$ we get a two-type CBRE-process $\{Y(t):t\ge 0\},$ and its transition semigroup ${Q}_t(x,\cdot)$ is given by
\begin{equation}\label{q}
\int_{\mathbb{R}_+^2}e^{-\langle\lambda,y\rangle}{Q}_t(x,\d y)=\mathbf{P}\big[\exp\big\{-\langle x,v_{0,t}(\xi,\lambda)\rangle\big\}\big].
\end{equation}

 According to Theorem 2.2 in \cite{qinzheng}, if $\mathbf{P}[\xi(1)]<0,$ $\int_{\| z\|\ge 1}\log(\| z\|)n(\d z)<\infty$ and the eigenvalues of~$b$~have strictly positive real parts, there is a unique limiting distribution $\mu$ for $X(t)$ as $t\rightarrow\infty.$ And for $Y(t)$ the limiting distribution $\mu=\delta_{\mathbf{0}},$ the Dirac measure for $(0,0).$

 The main purpose of this paper is to prove that the convergence still holds in {\it total variation distance}\ and is exponential in {\it $L^1$-Wasserstein distance}. By ${\mathcal{P}(\mbb{R}_{+}^2)}$ we denote the space of all Borel probability measures over $\mbb{R}_{+}^2$. Let $d$ be a metric on $\mbb{R}_{+}^2$ such that $(\mbb{R}_{+}^2,d)$ is a Polish space and define
\beqlb
{\mathcal{P}}_{d}(\mbb{R}_{+}^2)=\Big\{\rho\in{\mathcal{P}}(\mbb{R}_{+}^2):
\int_{\mbb{R}_{+}^2}d(x,0)\,\rho(\mrm{d}x)<\infty\Big\}.
\eeqlb
The Wasserstein distance on ${\mathcal{P}_{d}(\mbb{R}_{+}^2)}$ is defined by
\beqlb
W_{d}(P_1,P_2)=\inf_{\Pi\in\mathcal{C}(P_1,P_2)}\int_{\mbb{R}_{+}^2\times \mbb{R}_{+}^2}d(x,y)\,\Pi(\mathrm{d}x,\mathrm{d}y),
\eeqlb
where $\mathcal{C}(P_1,P_2)$ stands for the set of all coupling measures of $P_1$ and $P_2$, i.e. $\mathcal{C}(P_1,P_2)$ is the collection of measures on $\mbb{R}_{+}^2\times\mbb{R}_{+}^2$ having $P_1$ and $P_2$ as marginals. It can be shown that this infimum is attainable. According to Theorem 6.16 in \cite{villani}, $({\mathcal{P}_{d}(\mbb{R}_{+}^2)},W_{d})$ is also a Polish space. In the reminder of the article, we will use the following particular examples.

If $d_{TV}(x,y)=\mbf{1}_{\{x\neq y\}},$ then ${\mathcal{P}}_{d_{TV}}(\mbb{R}_{+}^2)={\mathcal{P}}(\mbb{R}_{+}^2)$ and
$$
W_{d_{TV}}(P_1,P_2)=\frac{1}{2}\| P_1-P_2\|_{TV}:=\frac{1}{2}\sup_{A\subset{\mathcal{B}}(\mbb{R}_{+}^2)}
\vert P_1(A)-P_2(A)\vert
$$
is the {\it total variation distance}.

If $d_1(x,y)=\vert x-y\vert$, then
$$
{\mathcal{P}}_{d_1}(\mbb{R}_{+}^2)=\Big\{\rho\in{\mathcal{P}}(\mbb{R}_{+}^2):
\int_{\mbb{R}_{+}^2}\vert x\vert\,\rho(\mrm{d}x)<\infty\Big\}
$$
and the corresponding $W_{d_1}$ is the\ {\it $L^1$-Wasserstein distance} written as $W_{d_1}:=W_1$.

For a matrix $A,$ we denote its determinant by $det(A),$ and its trace by $tr(A).$ Moreover, define $\beta := a + \frac{\sigma^2}{2} + \int_{[-1,1]}(\e^{z} - 1 - z)\nu(\d z) + \int_{[-1, 1]^c}(\e^{z} - 1)\nu(\d z),{ \Delta}:=[tr({b})]^2-4\ det ({b})>0$ and $\epsilon:=\sqrt{\Delta}+b_{22}-b_{11}+2b_{21}$.

Let $d^{\theta}$ be a metric on $\mbb{R}_{+}^2$ given by
$$
d^{\theta}(x,y)=(1+\theta)\vert x-y\vert
$$
for some positive constant $\theta$. Obviously, $(\mbb{R}_{+}^2,d^{\theta})$ is a complete separable metric space and  $({\mathcal{P}_{d^{\theta}}(\mbb{R}_{+}^2)},W_{d^{\theta}})$ is also a Polish space.

\begin{thm}
Suppose that $\int_{\| z\|\ge 1}\| z\|{ n(\d z)}<\infty,$ $\int_1^{\infty}\e^z{\nu(\d z)}<\infty,$ and ${ \beta}<\frac{1}{2}(tr({b})-\sqrt{{ \Delta}}).$ Then there exist positive constants $\rho>0,\theta>0$ and a unique stationary distribution $\mu\in\mathcal{P}_{1}(\mbb{R}_{+}^2)$ such that for any $x\in\mbb{R}_{+}^2$ and $t\geq0,$
$$
W_1( \delta_x\bar Q_t,\mu)\leq W_{d^{\theta}}(\delta_x,\mu)\e^{-\rho t},
$$
where $\delta_x\bar{Q}_t=\bar{Q}_t(x,\cdot)$. Moreover, the Laplace transform of $\mu$ is
$$
\int_{\mathbb{R}_+^2}\e^{-\langle\lambda,y\rangle}\mu(\d y)=\mathbf{P}\big[\exp\big\{-\int_{-\infty}^0\psi(v_{s,0}(\xi,\lambda))\d s\big\}\big].
$$
\end{thm}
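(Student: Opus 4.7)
The plan is to construct a synchronous coupling of two CBIRE-processes $X^x$ and $X^y$ driven by a common environment $\xi$ and common immigration noise, bound the $L^1$ Wasserstein distance between them via the first moment of the difference, and then pass to the stationary regime through the random skew convolution semigroup implicit in \eqref{qbar}. The first step is to exploit the conditioned branching property \eqref{122001}: conditionally on $\xi$, $Q_{r,t}^{\xi}(x,\cdot)\ast Q_{r,t}^{\xi}(y,\cdot)=Q_{r,t}^{\xi}(x+y,\cdot)$, so in the synchronous coupling the difference $X^x(t)-X^y(t)$ is distributed (conditionally, for $x\geq y$ componentwise) as the two-type CBRE-process $Y^{x-y}(t)$ in environment $\xi$. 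The standard splitting $(x,y)=(x\vee y,x\wedge y)+(x\wedge y,x\vee y)$ reduces the bound on $W_1(\delta_x\bar Q_t,\delta_y\bar Q_t)$ to controlling $\mathbf{P}|Y^{|x-y|}(t)|$.

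Next I would control this moment via \eqref{191006u}--\eqref{191006v}. Since the quadratic terms $c_i\lambda_i^2$ and the L\'evy remainders in $\phi_i$ are nonnegative, the linear part of $\phi$ dominates in the cumulant equation and a Gronwall argument yields, componentwise,
\begin{equation*}
\mathbf{P}[Y^z(t)\mid\xi]\leq e^{\xi(t)}\,e^{-b^{T}t}\,z,\qquad \mathbf{P}[Y^z(t)]\leq e^{\beta t}\,e^{-b^{T}t}\,z,\qquad z\in\mathbb{R}_+^2,
\end{equation*}
where the hypothesis $\int_1^{\infty}e^{z}\nu(\d z)<\infty$ is used precisely to make $\mathbf{P}[e^{\xi(t)}]=e^{\beta t}$ finite. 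Because $b_{12},b_{21}\leq 0$ by \eqref{1213-1} and $\Delta>0$, Perron--Frobenius applied to $-b$ (which is a Metzler matrix) yields a right eigenvector $\alpha=(\alpha_1,\alpha_2)$ of $b$ with strictly positive components corresponding to the smaller eigenvalue $\lambda_{-}:=\frac{1}{2}(tr(b)-\sqrt{\Delta})$; the combination $\epsilon=\sqrt{\Delta}+b_{22}-b_{11}+2b_{21}$ is exactly what fixes the ratio $\alpha_1/\alpha_2$ from the equation $b\alpha=\lambda_{-}\alpha$. Setting $\|y\|_\alpha:=\alpha_1 y_1+\alpha_2 y_2$, one has $\|e^{-b^{T}t}z\|_\alpha=(e^{-bt}\alpha)^{T}z=e^{-\lambda_{-}t}\|z\|_\alpha$. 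Choosing $\theta$ so that $|y|\leq\|y\|_\alpha\leq(1+\theta)|y|$ on $\mathbb{R}_+^2$ and writing $\rho:=\lambda_{-}-\beta>0$, which is positive exactly by the hypothesis $\beta<\frac{1}{2}(tr(b)-\sqrt{\Delta})$, gives
\begin{equation*}
W_1(\delta_x\bar Q_t,\delta_y\bar Q_t)\leq \mathbf{P}|Y^{|x-y|}(t)|\leq e^{-\rho t}(1+\theta)|x-y|=e^{-\rho t}W_{d^{\theta}}(\delta_x,\delta_y).
\end{equation*}

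Third, I would produce the stationary distribution directly from \eqref{qbar}. The exponential bound on $v_{s,t}$ above, together with $\int_{\|z\|\geq 1}\|z\|n(\d z)<\infty$ and the Lipschitz behaviour of $\psi$ at $0$, shows that $\int_0^t\psi(v_{s,t}(\xi,\lambda))\,\d s$ is bounded in $L^1(\mathbf{P})$ uniformly in $t$. By stationarity of the increments of $\xi$ this random variable has the same law as $\int_{-t}^{0}\psi(v_{s,0}(\xi,\lambda))\,\d s$, which converges almost surely as $t\to\infty$ to $\int_{-\infty}^{0}\psi(v_{s,0}(\xi,\lambda))\,\d s$; the limit is the Laplace exponent of a candidate probability $\mu\in\mathcal{P}_1(\mathbb{R}_+^2)$. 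The two-point contraction just established then extends, via a standard Cauchy-in-$W_1$ argument applied to $\{\delta_x\bar Q_t\}_{t\geq 0}$, to the inequality $W_1(\delta_x\bar Q_t,\mu)\leq W_{d^{\theta}}(\delta_x,\mu)e^{-\rho t}$ and simultaneously gives uniqueness of $\mu$.

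The main obstacle is the cross-type mixing produced by nonzero $b_{12},b_{21}$: the unweighted $\ell^1$ norm does \emph{not} contract at rate $\lambda_{-}$ under $e^{-b^{T}t}$, and one cannot diagonalize $b$ while remaining in $\mathbb{R}_+^2$. The resolution is to work with the Perron-type norm $\|\cdot\|_\alpha$ built from the positive eigenvector of $b$ guaranteed by the sign conditions \eqref{1213-1}; the explicit constants $\theta$ and $\epsilon$ in the statement are arranged so that passing back from $\|\cdot\|_\alpha$ to $|\cdot|$ costs only the prefactor $(1+\theta)$ and closes the contraction estimate cleanly in a single scalar norm.
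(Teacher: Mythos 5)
Your proof is correct and takes essentially the same route as the paper: a synchronous coupling built from the conditioned branching property (your step 1 is the paper's Proposition 3.2 and the coupling measure \eqref{branch1}), a first-moment contraction in a weighted $\ell^1$ norm (your Perron norm $\|\cdot\|_\alpha$ is exactly the $\theta$-weighted metric $d^\theta$ the paper obtains from its explicit computation of $\pi'_1(0,t),\pi'_2(0,t)$ and the sign of $\epsilon$, and your $\lambda_-=\frac{1}{2}(tr(b)-\sqrt{\Delta})$ is $-\lambda_1$), followed by Lemma 3.1 and convexity of $W_1$ against the stationary law. The only spot you should patch is the reducible case $b_{12}b_{21}=0$: there $-b$ is not irreducible, so no strictly positive eigenvector of $b$ belongs to the smaller eigenvalue and the norm $\|\cdot\|_\alpha$ degenerates; but then $e^{-bt}$ is triangular, the unweighted $\ell^1$ norm already contracts at rate $\lambda_-$, and $\theta=0$ suffices, which is precisely the paper's Case 1.
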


\begin{thm}
Suppose that Condition \ref{cond} (see section 4 for details) is satisfied. If $\int_{\| z\|\ge 1}\| z\|{ n(\d z)}<\infty,$ $\int_1^{\infty}\e^z{\nu(\d z)}<\infty,$ ${ \beta}<\frac{1}{2}(tr({b})-\sqrt{{ \Delta}})$ and $\liminf\limits_{t\rightarrow\infty}\xi(t)=-\infty,$ then there exists a unique stationary distribution $\mu$ such that for any $x\in\mbb{R}_{+}^2$ and $t\geq0,$
$$\lim\limits_{t\rightarrow\infty}\|\delta_x\bar Q_t-\mu\|_{TV}=0.$$
\end{thm}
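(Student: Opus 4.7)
The plan is to leverage Theorem~2.1, which already provides existence and uniqueness of the stationary distribution $\mu$, and upgrade the $L^1$-Wasserstein convergence to total variation convergence via an extinction estimate for the pure branching component. The starting point is the conditioned branching property: given the environment $\xi$, the transition $\bar{Q}_t^{\xi}(x,\cdot)$ decomposes as a convolution $Q_t^{\xi}(x,\cdot) \ast \Lambda_t^{\xi}$ of the branching part $Y$ and an immigration kernel $\Lambda_t^{\xi}$ representable through the random skew convolution semigroup introduced around~\eqref{qbar}. Since convolution is a contraction in total variation, one gets $\|\bar{Q}_t^{\xi}(x,\cdot) - \Lambda_t^{\xi}\|_{TV}\le 2\mathbf{P}(Y(t)\ne\mathbf{0}\mid\xi)$, and the problem reduces to (i) proving $\mathbf{P}(Y(t)\ne\mathbf{0})\to 0$, and (ii) identifying $\mathbf{P}[\Lambda_t^{\xi}]$ with a total-variation limit, which must coincide with $\mu$ by the uniqueness part of Theorem~2.1.

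For step (i) I would use the Dawson--Watanabe superprocess embedding announced in the introduction. Realize $Y$ as the total-mass vector of a two-type superprocess on an abstract set $\{1,2\}$ with distinct spatial motions and branching mechanism $\phi$; then construct a one-dimensional \emph{local projection} onto a single-type CBRE-process whose scalar branching mechanism satisfies Grey's condition. Condition~\ref{cond} is tailored exactly to make this projection valid and to ensure that Grey's integrability passes to the projected mechanism, mirroring the recent construction via the Lamperti representation in \cite{cha}. The scalar Grey condition, combined with $\liminf_{t\to\infty}\xi(t)=-\infty$ which pushes the environment to $-\infty$ along subsequences, yields extinction in finite time of the projection with probability tending to one; pulling this back through the projection (a one-dimensional zero forces the two-dimensional total mass to be zero, since every coordinate is dominated by the projection) gives $\mathbf{P}(Y(t)=\mathbf{0})\to 1$.

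For step (ii), the random skew convolution semigroup representation of $\Lambda_t^{\xi}$ allows us, after extending the environment to $(-\infty,\infty)$, to shift time and identify $\mathbf{P}[\Lambda_t^{\xi}]$ as the law of $\int_{-t}^{0}$-immigration flux, which increases (in convolution order) as $t\uparrow\infty$ to the measure with Laplace transform $\mathbf{P}[\exp\{-\int_{-\infty}^{0}\psi(v_{s,0}(\xi,\lambda))\d s\}]$; the integrability assumptions $\int_{\|z\|\ge 1}\|z\|\,n(\d z)<\infty$ and $\int_1^{\infty}\e^{z}\nu(\d z)<\infty$ guarantee finiteness, while the subcriticality hypothesis $\beta<\tfrac{1}{2}(\mathrm{tr}(b)-\sqrt{\Delta})$ and Theorem~2.1 identify the limit with $\mu$. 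Combining (i) and (ii) through the convolution-contraction bound yields $\|\delta_x\bar{Q}_t-\mu\|_{TV}\to 0$. The chief technical obstacle is step (i): setting up the superprocess so that its one-dimensional local projection faithfully encodes ``total extinction'' of the two-type process, and checking that Condition~\ref{cond} really does force the projected branching mechanism to satisfy a scalar Grey-type integrability condition in the random environment setting; once that geometric/analytic step is in place, the remainder of the argument is a rather routine assembly of standard estimates.
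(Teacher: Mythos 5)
Your plan splits into the right two ingredients, and your step~(i) is essentially the paper's Theorem~\ref{0725}: one bounds the two-type random cumulant semigroup $u_{r,t}^{(i)}(\xi,\lambda)\le w_{r,t}(\xi,\|\lambda\|)$ by building an inhomogeneous Dawson--Watanabe superprocess on $E=\{1,2\}$ with the nonlocal spatial motion encoding the off-diagonal $b_{12},b_{21}$, then using the comparison theorem (Proposition~\ref{com}) against the one-type mechanism $\varphi$ of Condition~\ref{cond}. Combined with $\liminf_{t\to\infty}\xi(t)=-\infty$, this gives $\lim_{t\to\infty}\bar{v}_{0,t}^{\xi}=0$ a.s., which is precisely the statement that the pure branching component goes extinct. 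Your description of the ``local projection'' and of the role of Condition~\ref{cond} matches the paper.

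Step~(ii), however, contains a gap, and the paper takes a cleaner route around it. You decompose $\bar{Q}_t^{\xi}(x,\cdot)=Q_t^{\xi}(x,\cdot)\ast\Lambda_t^{\xi}$, use $\|\bar{Q}_t^{\xi}(x,\cdot)-\Lambda_t^{\xi}\|_{TV}\le 2(1-\e^{-\langle x,\bar v_{0,t}(\xi)\rangle})$, and then want $\mathbf{P}[\Lambda_t^{\xi}]\to\mu$ in total variation. But the only evidence you offer for the latter is monotone convergence of Laplace exponents (``increases in convolution order to the measure with Laplace transform $\mathbf{P}[\exp\{-\int_{-\infty}^{0}\psi(v_{s,0}(\xi,\lambda))\d s\}]$''), which by itself yields only \emph{weak} convergence of $\mathbf{P}[\Lambda_t^{\xi}]$ to $\mu$, not TV convergence. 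Since $\mathbf{P}[\Lambda_t^{\xi}]=\bar{Q}_t(\mathbf{0},\cdot)$, what you are asked to supply is essentially a separate TV-convergence statement that is a special case of the theorem itself; to close the loop you would additionally have to show that the convolution increments $\sigma_{t,\infty}^{\xi}$ satisfying $\tilde{\Lambda}_\infty^{\xi}=\tilde{\Lambda}_t^{\xi}\ast\sigma_{t,\infty}^{\xi}$ converge to $\delta_0$ in TV, i.e.\ $\sigma_{t,\infty}^{\xi}(\{0\})=\exp\{-J_{-\infty,-t}(\xi,\bar v_{-t,0}(\xi))\}\to 1$, which is another extinction-type estimate not in your proposal. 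The paper avoids this entirely: it conditions on $\xi$, couples $\bar{Q}_t^{\xi}(x,\cdot)$ and $\bar{Q}_t^{\xi}(y,\cdot)$ by sharing the immigration part $\Upsilon_t^{\xi}$ and the common branching part from $x\wedge y$, obtaining $\|\bar{Q}_t(x,\cdot)-\bar{Q}_t(y,\cdot)\|_{TV}\le 2\,\mathbf{P}\big[1-\exp\{-\langle|x-y|,\bar v_{0,t}(\xi)\rangle\}\big]$, and then integrates this bound against a coupling $H$ of $(\delta_x,\mu)$, invoking stationarity $\mu\bar Q_t=\mu$ and dominated convergence. That route never needs a TV limit of the immigration semigroup on its own. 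So: your extinction step is right and coincides with the paper; your second step needs the extra $\sigma_{t,\infty}^{\xi}(\{0\})\to1$ argument, or should be replaced by the stationarity-coupling used in the paper.
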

\section{Proofs }\label{sec3}

\subsection{Exponential ergodicity  in the $ L_1$-Wasserstein distance }\label{subsec2}

 In this section we give the proofs of the exponential ergodicity in  $ L_1$-Wasserstein distance. Our method of coupling is strongly influenced by \cite{chenli,li2}, and the key to the proof is the use of conditional branching property. For $0<r<t,$ define
\begin{eqnarray*}
\pi'_1(r,t)\ar=\ar \frac{\partial u_{r,t}^{(1)}(\xi,\lambda)}{\partial \lambda_1}\vert_{\lambda=\mathbf{0+}}+\frac{\partial u_{r,t}^{(1)}(\xi,\lambda)}{\partial \lambda_2}\vert_{\lambda=\mathbf{0+}},\cr
\pi'_2(r,t)\ar=\ar\frac{\partial u_{r,t}^{(2)}(\xi,\lambda)}{\partial \lambda_1}\vert_{\lambda=\mathbf{0+}}+\frac{\partial u_{r,t}^{(2)}(\xi,\lambda)}{\partial \lambda_2}\vert_{\lambda=\mathbf{0+}}.
\end{eqnarray*}
Differentiating both sides of \eqref{191006u}, we get,
 \begin{eqnarray*}\label{1213-3}
   \frac{\partial u_{r,t}^{(1)}(\xi,\lambda)}{\partial \lambda_1}\vert_{\lambda=\mathbf{0+}} &=& 1-\int_r^t\Big(b_{11}\frac{\partial u_{s,t}^{(1)}(\xi,\lambda)}{\partial \lambda_1}\vert_{\lambda=\mathbf{0+}}+b_{12}\frac{\partial u_{s,t}^{(2)}(\xi,\lambda)}{\partial \lambda_1}\vert_{\lambda=\mathbf{0+}}\Big)\d s, \nnm\\
   \frac{\partial u_{r,t}^{(2)}(\xi,\lambda)}{\partial \lambda_1}\vert_{\lambda=\mathbf{0+}} &=& -\int_r^t\Big(b_{21}\frac{\partial u_{s,t}^{(1)}(\xi,\lambda)}{\partial \lambda_1}\vert_{\lambda=\mathbf{0+}}+b_{22}\frac{\partial u_{s,t}^{(2)}(\xi,\lambda)}{\partial \lambda_1}\vert_{\lambda=\mathbf{0+}}\Big)\d s, \nnm\\
     \frac{\partial u_{r,t}^{(2)}(\xi,\lambda)}{\partial \lambda_2}\vert_{\lambda=\mathbf{0+}} &=& 1-\int_r^t\Big(b_{22}\frac{\partial u_{s,t}^{(2)}(\xi,\lambda)}{\partial \lambda_2}\vert_{\lambda=\mathbf{0+}}+b_{21}\frac{\partial u_{s,t}^{(1)}(\xi,\lambda)}{\partial \lambda_2}\vert_{\lambda=\mathbf{0+}}\Big)\d s, \nnm\\
       \frac{\partial u_{r,t}^{(1)}(\xi,\lambda)}{\partial \lambda_2}\vert_{\lambda=\mathbf{0+}} &=& -\int_r^t\Big(b_{12}\frac{\partial u_{s,t}^{(2)}(\xi,\lambda)}{\partial \lambda_2}\vert_{\lambda=\mathbf{0+}}+b_{11}\frac{\partial u_{s,t}^{(1)}(\xi,\lambda)}{\partial \lambda_2}\vert_{\lambda=\mathbf{0+}}\Big)\d s.
 \end{eqnarray*}
Then, it is not difficult to see,
\begin{eqnarray*}
\pi'_1(r,t)\ar=\ar 1-\int_r^t[b_{11}\pi'_1(s,t)+b_{12}\pi'_2(s,t)]\d s,\cr
\pi'_2(r,t)\ar=\ar 1-\int_r^t[b_{22}\pi'_2(s,t)+b_{21}\pi'_1(s,t)]\d s.
\end{eqnarray*}
For $0<r<t,$ set
\beqlb\label{1.5}
\pi_1(r,t)\ar=\ar\mathbf{P}\frac{\partial v_{r,t}^{(1)}(\xi,\lambda)}{\partial \lambda_1}\vert_{\lambda=\mathbf{0+}}+\mathbf{P}\frac{\partial v_{r,t}^{(1)}(\xi,\lambda)}{\partial \lambda_2}\vert_{\lambda=\mathbf{0+}},\cr
\pi_2(r,t)\ar=\ar\mathbf{P}\frac{\partial v_{r,t}^{(2)}(\xi,\lambda)}{\partial \lambda_1}\vert_{\lambda=\mathbf{0+}}+\mathbf{P}\frac{\partial v_{r,t}^{(2)}(\xi,\lambda)}{\partial \lambda_2}\vert_{\lambda=\mathbf{0+}}.
\eeqlb

\begin{lem} Suppose that $\int_1^{\infty}\e^{z}\nu(\d z)<\infty.$ For $t\ge0$, \begin{equation}\label{1214-1}
\pi(0,t)=\e^{\beta t}\pi'(0,t).
\end{equation}
\end{lem}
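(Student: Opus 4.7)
The plan is to simply compute $\pi(0,t)$ from the defining relation \eqref{191006v} by the chain rule and then take expectations, exploiting two observations: first, that the first-order derivatives of $u_{r,t}^{(i)}(\xi,\lambda)$ at $\lambda=\mathbf{0}+$ are actually deterministic; and second, that under the hypothesis $\int_1^\infty \e^z\nu(\d z)<\infty$ the Laplace exponent of $\xi$ at $1$ is exactly $\beta$.

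More concretely, differentiating \eqref{191006v} with respect to $\lambda_j$ and evaluating at $\lambda=\mathbf{0}+$ gives, by the chain rule,
\begin{equation*}
\frac{\partial v_{r,t}^{(i)}(\xi,\lambda)}{\partial \lambda_j}\bigg|_{\lambda=\mathbf{0}+}
= \e^{-\xi(r)}\cdot \e^{\xi(t)}\cdot \frac{\partial u_{r,t}^{(i)}(\xi,\mu)}{\partial \mu_j}\bigg|_{\mu=\mathbf{0}+}.
\end{equation*}
The point of the first observation is that the right-hand derivative is a deterministic object: plugging the explicit forms \eqref{phi 1'}--\eqref{phi 2'} into \eqref{191006u} and differentiating under the integral sign, one sees that the $\e^{\xi(s)}$ prefactor exactly cancels the $\e^{-\xi(s)}$ factor sitting on the linear part of $\phi_i$, while the quadratic term in $c_i$ and the Lévy integral term both vanish at $u=0$; this is precisely what yields the deterministic linear integral system for $\partial u^{(i)}_{r,t}/\partial\lambda_j|_{\mathbf{0}+}$ displayed immediately before \eqref{1.5}. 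Hence $\pi'_i(r,t)$ is deterministic and can be pulled out of the expectation.

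Taking $\mathbf{P}$ on both sides and summing over $j\in\{1,2\}$ therefore gives
\begin{equation*}
\pi_i(r,t) = \mathbf{P}\bigl[\e^{\xi(t)-\xi(r)}\bigr]\cdot \pi'_i(r,t).
\end{equation*}
It remains to identify $\mathbf{P}[\e^{\xi(t)-\xi(r)}]$. Since $\{\xi(t)\}$ is an $(\mathscr{F}_t)$-Lévy process with Lévy--Itô decomposition \eqref{191006xi}, the stationarity and independence of its increments, together with the integrability conditions $\int_1^\infty \e^z\nu(\d z)<\infty$ and $\int_{(0,\infty)}(1\wedge z^2)\nu(\d z)<\infty$, imply via the Lévy--Khintchine formula that $\mathbf{P}[\e^{\xi(s)}]=\e^{\beta s}$ for all $s\ge 0$ with exponent equal to
\begin{equation*}
a+\frac{\sigma^2}{2}+\int_{[-1,1]}(\e^z-1-z)\nu(\d z)+\int_{[-1,1]^c}(\e^z-1)\nu(\d z)=\beta.
\end{equation*}
Setting $r=0$ gives $\mathbf{P}[\e^{\xi(t)}]=\e^{\beta t}$, and hence $\pi(0,t)=\e^{\beta t}\pi'(0,t)$, as claimed.

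There is no real obstacle here: the only subtle point is the verification that the first-order derivative of $u_{r,t}$ at $\mathbf{0}+$ is $\xi$-independent, which follows by inspection of the defining equation \eqref{191006u} and is already tacitly used by the paper when it writes the deterministic ODE system for $\partial u^{(i)}_{r,t}/\partial\lambda_j|_{\mathbf{0}+}$. The integrability assumption $\int_1^\infty \e^z\nu(\d z)<\infty$ is used precisely to ensure that $\mathbf{P}[\e^{\xi(t)}]$ is finite and equals $\e^{\beta t}$.
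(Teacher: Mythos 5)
Your proposal is correct and follows essentially the same route as the paper's proof: differentiate the relation $v_{r,t}=\e^{-\xi(r)}u_{r,t}(\xi,\e^{\xi(t)}\cdot)$ at $\lambda=\mathbf{0}+$ by the chain rule, observe that $\partial u_{r,t}/\partial\lambda|_{\mathbf{0}+}$ is deterministic (the paper records it explicitly as $\e^{b(r-t)}$), and combine with $\mathbf{P}[\e^{\xi(t)}]=\e^{\beta t}$. The only cosmetic difference is that you derive the latter identity directly from the L\'evy--Khintchine formula rather than citing Lemma 3.2 of \cite{jizheng} as the paper does.
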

\begin{proof}
By Lemma 3.2 in \cite{jizheng}, if $\int_1^{\infty}\e^{z}\nu(\d z)<\infty,$ then $\mathbf{P} \e^{\xi(t)}=\e^{\beta t}$ for all $t\ge 0$. And for all~$t\ge r\in I,$
$$\frac{\partial v_{r,t}(\xi,\lambda)}{\partial \lambda}\vert_{\lambda=\mathbf{0}+} =\e^{\xi(t)-\xi(r)} \frac{\partial u_{r,t}(\xi,\lambda)}{\partial \lambda}\vert_{\lambda=\mathbf{0}+}.$$ 
Since $\frac{\partial u_{r,t}(\xi,\lambda)}{\partial \lambda}\vert_{\lambda=\mathbf{0}+}=\e^{{b}(r-t)}$ is not random, where
$$\e^{b(r-t)}:=I_{2\times 2}+b(r-t)+\frac{(r-t)^2}{2!}b^2+\cdots+\frac{(r-t)^k}{k!}b^k+\cdots,$$
 and $I_{2\times 2}$ is the $2\times 2$ identity matrix. Thus (\ref{1214-1}) holds.
\end{proof}
\begin{prop}\label{prop3.2}
Suppose that $\int_{\|z\|\ge 1}\|z\|n(\d z)<\infty.$ Let $(\bar Q_t)_{t\geq0}$ be the transition semigroup of a two-type CBIRE-process. Then for all $x,y\in\mbb{R}_{+}^2$ and $t\geq0$ we have
\beqlb\label{1214-2}
\vert\langle x-y,\pi(0,t)\rangle\vert\leq W_1(\delta_x\bar Q_t,\delta_y
\bar Q_t)\leq\sum_{i=1}^2\vert x_i-y_i\vert\pi_i(0,t),
\eeqlb
\end{prop}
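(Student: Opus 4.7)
The plan is to work conditionally on the L\'evy environment $\xi$, where $\bar Q_t$ admits a tractable branching--immigration decomposition, and to exploit the conditional branching property \eqref{122001}. From \eqref{qbar}, for each realization of $\xi$ the CBIRE-process started at $x$ has the same conditional law as $Z^x(t)+I(t)$, where $Z^x(t)\sim Q_t^{\xi}(x,\cdot)$, the summand $I(t)$ has conditional Laplace transform $\exp\{-\int_0^t\psi(v_{s,t}(\xi,\lambda))\d s\}$, the two summands are conditionally independent, and crucially the law of $I(t)$ does not depend on the initial state. This reduces both bounds to a moment computation on the branching part $Z^x$.

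For the upper bound, decompose componentwise $x=u+v$ and $y=u+w$ with $u_i=\min(x_i,y_i)$, $v_i=(x_i-y_i)^+$, $w_i=(y_i-x_i)^+$, so that $v_iw_i=0$ and $v_i+w_i=|x_i-y_i|$. Conditional on $\xi$, let $Z^u,Z^v,Z^w,I$ be mutually independent with the prescribed laws and set $\tilde X^x:=Z^u+Z^v+I$, $\tilde X^y:=Z^u+Z^w+I$. By the conditional branching property \eqref{122001}, $Q_t^{\xi}(x,\cdot)=Q_t^{\xi}(u,\cdot)\ast Q_t^{\xi}(v,\cdot)$, hence $\tilde X^x\stackrel{d}{=}X(t)$ under $\delta_x\bar Q_t$ and similarly for $y$; this is an admissible coupling, so
\beqnn
W_1(\delta_x\bar Q_t,\delta_y\bar Q_t)\le\mathbf{P}|Z^v-Z^w|\le\mathbf{P}|Z^v|+\mathbf{P}|Z^w|.
\eeqnn
Differentiating the Laplace transform \eqref{191007q} at $\lambda=\mathbf{0+}$ gives $\mathbf{P}[Z^{v,(j)}(t)\,|\,\xi]=v_1\partial_{\lambda_j}v_{0,t}^{(1)}(\xi,\lambda)|_{\lambda=\mathbf{0+}}+v_2\partial_{\lambda_j}v_{0,t}^{(2)}(\xi,\lambda)|_{\lambda=\mathbf{0+}}$. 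Summing $j=1,2$ and taking $\mathbf{P}$-expectation yields $\mathbf{P}|Z^v|=\sum_i v_i\pi_i(0,t)$ and analogously $\mathbf{P}|Z^w|=\sum_i w_i\pi_i(0,t)$, so the two contributions combine into $\sum_i|x_i-y_i|\pi_i(0,t)$. Finiteness of all first moments involved is guaranteed by the standing hypotheses on $m_1,m_2$ together with $\int_{\|z\|\ge 1}\|z\|n(\d z)<\infty$.

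For the lower bound I would invoke Kantorovich--Rubinstein duality, namely $W_1(P_1,P_2)=\sup_f|\int f\,\d P_1-\int f\,\d P_2|$ over $1$-Lipschitz $f$, and test against $f(z)=z_1+z_2$. A direct differentiation of \eqref{qbar} at $\lambda=\mathbf{0+}$ shows that the immigration contribution to $\mathbf{P}X^{(j)}(t)$ is independent of the initial value and therefore cancels, leaving
\beqnn
\int f\,\d\bar Q_t(x,\cdot)-\int f\,\d\bar Q_t(y,\cdot)=\sum_{j=1}^{2}\sum_{i=1}^{2}(x_i-y_i)\mathbf{P}\bigl[\partial_{\lambda_j}v_{0,t}^{(i)}(\xi,\lambda)|_{\lambda=\mathbf{0+}}\bigr]=\langle x-y,\pi(0,t)\rangle,
\eeqnn
and taking absolute values delivers $|\langle x-y,\pi(0,t)\rangle|\le W_1(\delta_x\bar Q_t,\delta_y\bar Q_t)$.

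The main subtlety to watch is that $(X(t))_{t\ge 0}$ is \emph{not} itself a branching process; only the environment-conditioned kernel $Q_t^{\xi}$ carries the branching property \eqref{122001}. The entire coupling must therefore be constructed pathwise in $\xi$ before integrating, and the interchange of $\mathbf{P}$-expectation with $\lambda$-differentiation used to extract the moment formulas has to be justified by dominated convergence under the stated moment hypotheses. Once these two points are in place, the rest of the argument is essentially bookkeeping.
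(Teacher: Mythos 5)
Your proposal is correct and follows essentially the same route as the paper: the componentwise decomposition $x=x\wedge y+(x-y)_+$, $y=x\wedge y+(x-y)_-$, the environment-conditioned coupling built from the branching property of $Q_t^{\xi}$ before averaging over $\xi$, the reduction of the upper bound to first moments by differentiating the Laplace transform, and the lower bound from testing against the Lipschitz function $z\mapsto z_1+z_2$ are precisely the paper's steps. The paper merely formalizes your $I(t)$ as a random skew convolution semigroup $\Upsilon^\xi_{r,t}$ and cites a mean-difference inequality (Chen, Theorem~5.10) rather than naming the Kantorovich--Rubinstein duality, but the content is identical.
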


\begin{proof} 
  For $i=1,2,$ denote $\gamma_i=h_i+\int_{\mathbb{R}_+^2}z_in(\d z).$ Taking derivatives with respect to $\lambda=\mathbf{0}+$\ on both sides of \eqref{qbar}, we get,
$$ \int_{\mathbb{R}_+^2}(y_1+y_2)\bar{Q}_t(x,\d y)=\langle x,\pi(0,t)\rangle +\int_0^t\langle \gamma,\pi(s,t)\rangle\d s.$$
It follows from Theorem 5.10 in \cite{chen} that,
$$
W_1(\delta_x\bar Q_t,\delta_y\bar Q_t)\geq\int_{\mbb{R}_{+}^2}(z_1+z_2)\,\Big(\bar Q_t(x,\mrm{d}z)-\bar Q_t(y,\mrm{d}z)\Big)=
\langle x-y,\pi(0,t)\rangle.
$$
Symmetrically, $W_1(\delta_x\bar Q_t,\delta_y\bar Q_t)\geq\langle y-x,\pi(0,t)\rangle,$ then the first inequality follows. On the other hand, for $x,y\in\mbb{R}_{+}^2,$ put $(x-y)_{\pm}:=((x_1-y_1)_{\pm},(x_2-y_2)_{\pm})$, and $x\wedge y:=x-(x-y)_{+}=y-(x-y)_{-}.$
Let $Q^{\xi}_t(x,y,\mrm{d}\eta_1,\mrm{d}\eta_2)$ be the image of the product measure
$$
Q_t^{\xi}(x\wedge y,\mrm{d}\gamma_0)Q_t^{\xi}((x-y)_{+},\mrm{d}\gamma_1)Q_t^{\xi}((x-y)_{-},\mrm{d}\gamma_2)
$$
under the mapping $(\gamma_0,\gamma_1,\gamma_2)\mapsto(\eta_1,\eta_2):=(\gamma_0+\gamma_1,\gamma_0+\gamma_2)$. Define $Q_t(x,y,\mrm{d}\eta_1,\mrm{d}\eta_2)$ on $\mbb{R}_{+}^4$ by
\begin{equation}\label{branch}
Q_t(x,y,\mrm{d}\eta_1,\mrm{d}\eta_2)=\mbf{P}Q^{\xi}_t(x,y,\mrm{d}\eta_1,\mrm{d}\eta_2).
\end{equation}
It's not hard to see that
$Q_t(x,y,\mrm{d}\eta_1,\mrm{d}\eta_2)$ is a coupling of $Q_t(x,\mrm{d}\eta_1)$ and $Q_t(y,\mrm{d}\eta_2)$.

{ By \eqref{122001} and Theorem 1.35 in \cite{li}, for $x\in\mathbb{R}_+^2$ and $r\in[0,t]$ there exists $a^x_{r,t}\in\mathbb{R}_+^2$ and a finite measure $(1\wedge \vert z\vert)l^x_{r,t}(\d z),$ such that,
\begin{equation}\label{122002} \langle x,v_{r,t}(\zeta,\lambda)\rangle=\langle a_{r,t}^x,\lambda\rangle+\int_{\mathbb{R}_+^2\setminus\{\mathbf{0}\}}(1-\e^{-\langle\lambda, z\rangle})l_{r,t}^x(\d z),\quad \lambda\in\mathbb{R}_+^2,
\end{equation}
where $v_{r,t}(\zeta,\lambda)$ is defined by \eqref{191007q} with $\xi$ replaced by a  c\`{a}dl\`{a}g function $\zeta=\{\zeta(t):t\in \mathbb{R}\}.$
By \eqref{122003}, \eqref{122002} and  Theorem 1.37 in \cite{li}, for $r\in[0,t]$, there exists $A_{r,t}\in\mathbb{R}_+^2$ and a finite measure $(1\wedge \vert z\vert)L_{r,t}(\d z),$ such that,
\begin{equation}\label{122004}
J_{r,t}(\zeta,\lambda):=\int_r^t\psi(v_{s,t}(\zeta,\lambda))\d s
=\langle A_{r,t},\lambda\rangle+\int_{\mathbb{R}_+^2\setminus\{\mathbf{0}\}}(1-\e^{-\langle\lambda, z\rangle})L_{r,t}(\d z),\quad \lambda\in\mathbb{R}_+^2.
\end{equation}
For $a_s=(a^{(1)}_s,a^{(2)}_s)\in\mathbb{R}^2$,  $\int_r^t a_s\d s=\big(\int_r^t a^{(1)}_s\d s,\int_r^t a^{(2)}_s \d s\big).$ In fact, by taking $x=h$ in \eqref{122002}, we can calculate that,
$$A_{r,t}=\int_r^t a_{s,t}^h \d s,$$
$$L_{r,t}(\d z)=\int_r^t\big[l_{s,t}^h(\d z)+\int_{\mathbb{R}_+^2\setminus\{\mathbf{0}\}}Q^{\zeta}_{s,t}(y,\d z)n(\d y)\big]\d s.$$
Following Theorem 1.35 in \cite{li}, for a c\`{a}dl\`{a}g function $\zeta$ 
and $r\in[0,t]$, we can define an infinitely divisible measure $\Upsilon^{\zeta}_{r,t}$ on $\mathbb{R}_+^2$ by \begin{equation}\label{122005}
\int_{\mathbb{R}_+^2}\e^{-\langle \lambda,y\rangle}\Upsilon^{\zeta}_{r,t}(\d y)=J_{r,t}(\zeta,\lambda).
\end{equation}
It is easy to verify that
$$J_{r,t}(\zeta,\lambda)=J_{r,s}(\zeta,v_{s,t}(\zeta,\lambda))+J_{s,t}(\zeta,\lambda).$$
When the function $\zeta$ reduces into the case $\zeta(t)\equiv 0,$ $\big(v_{r,t}(\zeta,\cdot)\big)_{r\le t}$ goes back to the cumulant semigroup of a classical CB-process, and $(\Upsilon^{\zeta}_{r,t})_{r\le t }$ goes back to the skew convolution semigroup of a classical CBI-process.

 Let $(\Upsilon^{\zeta}_{r,t})_{r\le t }$ be the \emph{random skew convolution semigroup} associated with $(Q^{\xi}_{r,t})_{r\le t }$ defined by \eqref{122005} with $\zeta=\xi.$} Let $\bar Q_{r,t}^{\xi}(x,\cdot):= Q_{r,t}^{\xi}(x,\cdot)\ast\Upsilon^{\xi}_{r,t},\ r\le t.$
It is clear that for $x\in\mathbb{R}_+^2,t\ge 0,$
 \begin{equation}\label{1214-3}
 \bar Q_t(x,\cdot)=\p [\bar Q_{0,t}^{\xi}(x,\cdot)].
 \end{equation}
 For more details on skew convolution semigroups, see Chapter 9 in \cite{li}. Let $\bar Q^{\xi}_t(x,y,\mrm{d}\sigma_1,\mrm{d}\sigma_2)$ be the image of $\Upsilon_t(\d\eta_0)Q^{\xi}_t(x,y,\mrm{d}\eta_1,\mrm{d}\eta_2)$ under the mapping $(\eta_0,\eta_1,\eta_2)\mapsto(\sigma_1,\sigma_2)=(\eta_0+\eta_1,\eta_0+\eta_2).$ Define $\bar Q_t(x,y,\mrm{d}\sigma_1,\mrm{d}\sigma_2)$ on $\mbb{R}_{+}^4$ by
\begin{equation}\label{branch1}
\bar Q_t(x,y,\mrm{d}\sigma_1,\mrm{d}\sigma_2):=\mbf{P}\bar Q^{\xi}_t(x,y,\mrm{d}\sigma_1,\mrm{d}\sigma_2).
\end{equation} Then by the relation \eqref{1214-3}, we can verify that $\bar Q_t(x,y,\mrm{d}\sigma_1,\mrm{d}\sigma_2)$ is the coupling measure of $\bar Q_t(x,\d  \sigma_1)$ and $\bar Q_t(y,\d  \sigma_2).$ For similar construction in the setting of measure-valued processes, see \cite{li2}.
Finally,
\begin{eqnarray}\label{12801}
&&W_1(\delta_x\bar Q_t,\delta_y\bar Q_t)\leq\int_{\mbb{R}_{+}^2\times\mbb{R}_{+}^2}\vert\sigma_1-\sigma_2\vert\bar Q_t(x,y,\mrm{d}\sigma_1,\mrm{d}\sigma_2)\cr
&&\quad\leq\mbf{P}\Big(\int_{\mbb{R}_{+}^2}(\gamma_{11}+\gamma_{12}) Q^{\xi}_t((x-y)_{+},\mrm{d}\gamma_1)
+\int_{\mbb{R}_{+}^2}(\gamma_{21}+\gamma_{22}) Q^{\xi}_t((x-y)_{-},\mrm{d}\gamma_2)\Big)\cr
&&\quad=\mbf{P}\Big(\int_{\mbb{R}_{+}^2}(\zeta_1+\zeta_2)\, Q^{\xi}_t((\vert x_1-y_1\vert,\vert x_2-y_2\vert), \mrm{d}\zeta)\Big)\cr
&&\quad=\int_{\mbb{R}_{+}^2}(\zeta_{1}+\zeta_{2})\, Q_t((\vert x_1-y_1\vert,\vert x_2-y_2\vert),\mrm{d}\zeta) \cr &&\quad=\sum_{i=1}^2\vert x_i-y_i\vert\pi_i(0,t),
\end{eqnarray}
where the first equality comes from the conditioned branching property.
\end{proof}

We are now in a position to prove our first main result.

\noindent\textbf{Proof of Theorem 2.1} Because $\p[\xi(1)]<{ \beta}<\frac{1}{2}(tr({b})-\sqrt{{ \Delta}}),$ and $\frac{1}{2}(tr({b})-\sqrt{{ \Delta}})$ is the smallest eigenvalue of $b,$ we may adjust the parameters so that $\p[\xi(1)]<0,$ and the eigenvalues of $b$ are strictly positive. According to Equation (10-11) in \cite{qinzheng}, this adjustment is without loss of generality. Thus, by the result of Theorem 2.2 in \cite{qinzheng}, for~$x\in\mathbb{R}_+^2,$ ${Q}_t(x,\cdot)$~converges weakly to~$\mu$~as~$t\rightarrow\infty.$

 By assumption, the equation
$$
\lambda^2+(b_{11}+b_{22})\lambda+b_{11}b_{22}-b_{12}b_{21}=0
$$
has two different solutions on $\mbb{R}_{-}$, which we denote by $\lambda_1$ and $\lambda_2$. Recall that $\Delta=(b_{22}-b_{11})^2+4b_{12}b_{21}>0$, we have
$\lambda_1=\frac{1}{2}(-b_{11}-b_{22}+\sqrt{\Delta}),\lambda_2=\lambda_1-\sqrt{\Delta}.$ Note that \eqref{1213-1} implies $b_{12},b_{21}\le 0$.

{\bf Case 1:} $b_{12}b_{21}=0$. it is straightforward to show that
$$
\pi'_1(0,t)=\mrm{e}^{-b_{11}t},\quad\pi'_2(0,t)=\mrm{e}^{-b_{22}t}.
$$

{\bf Case 2:} $b_{12}b_{21}\neq0$. Some simple calculations yield
\beqnn
&&\pi'_1(0,t)=\frac{b_{12}(b_{22}-b_{11}+2b_{21}-\sqrt{\Delta})}{\sqrt{\Delta}(\sqrt{\Delta}+b_{11}-b_{22})}
\mrm{e}^{\lambda_1t}-\frac{b_{12}(\sqrt{\Delta}+b_{22}-b_{11}+2b_{21})}
{\sqrt{\Delta}(-\sqrt{\Delta}+b_{11}-b_{22})}
\mrm{e}^{\lambda_2t},\\
&&\pi'_2(0,t)=\frac{b_{11}-b_{22}-2b_{21}+\sqrt{\Delta}}{2\sqrt{\Delta}}\mrm{e}^{\lambda_1t}
+\frac{\sqrt{\Delta}+b_{22}-b_{11}+2b_{21}}{2\sqrt{\Delta}}
\mrm{e}^{\lambda_2t}.
\eeqnn
If $\epsilon:=\sqrt{\Delta}+b_{22}-b_{11}+2b_{21}<0$, then there exist $\theta_{11}=-\frac{b_{12}(2\sqrt{\Delta}-\epsilon)}
{\sqrt{\Delta}(\sqrt{\Delta}+b_{11}-b_{22})}\in(0,1)$ and $\theta_{12}=\frac{2\sqrt{\Delta}-\epsilon}{2\sqrt{\Delta}}>0$ such that
$$
\pi'_1(0,t)=\theta_{11}\mrm{e}^{\lambda_1t}+(1-\theta_{11})\mrm{e}^{\lambda_2t},\,
\pi'_2(0,t)\leq\theta_{12}\mrm{e}^{\lambda_1t}+\mrm{e}^{\lambda_2t}.
$$
If $\epsilon>0$, there exist  $\theta_{21}=-\frac{b_{12}(2\sqrt{\Delta}-\epsilon)}{\sqrt{\Delta}(\sqrt{\Delta}+b_{11}-b_{22})}>0$ and $\theta_{22}=\frac{2\sqrt{\Delta}-\epsilon}{2\sqrt{\Delta}}\in(0,1)$ such that
$$
\pi'_1(0,t)\leq\theta_{21}\mrm{e}^{\lambda_1t}+\mrm{e}^{\lambda_2t},\,
\pi'_2(0,t)=\theta_{22}\mrm{e}^{\lambda_1t}+(1-\theta_{22})\mrm{e}^{\lambda_2t}.
$$
If $\epsilon=0$, then $\pi'_1(0,t)=\pi'_2(0,t)=\mrm{e}^{\lambda_1t}.$

Then, for all $t\geq0$ and $x,y\in\mbb{R}_{+}^2$, there exists $\theta=\max\{\theta_{12},\theta_{21}\}$ such that
\beqlb\label{1213-2}
\sum_{i=1}^2\vert x_i-y_i\vert\pi_i'(0,t)\leq d^{\theta}(x,y)\mrm{e}^{\lambda_1t}
\eeqlb
Notice that under the assumption $\int_{\| z\|\ge 1}\| z\|{ n(\d z)}<\infty$ and ${\beta}<\frac{1}{2}(tr({b})-\sqrt{{ \Delta}}),$ the limiting distribution $\mu$ has finite expectation. Therefore,
$W_1( \delta_x\bar Q_t,\mu),W_{d^{\theta}}(\delta_x,\mu)$ are well-defined.
By equation \eqref{1214-1}, \eqref{1214-2}, \eqref{1213-2} and the convexity of the Wasserstein distance, we obtain,
{\beqnn
&W_1(\delta_x\bar Q_t,\mu)&\leq\int_{\mbb{R}_{+}^4} W_1(\delta_x\bar Q_t,\delta_y\bar Q_t)\,H(\mrm{d}x,\mrm{d}y)\\
&&\leq\mrm{e}^{(\beta+\lambda_1)t}\int_{\mbb{R}_{+}^4}W_{d^{\theta}}(x,y)\,H(\mrm{d}x,\mrm{d}y)\\
&&=\mrm{e}^{(\beta+\lambda_1)t}W_{d^{\theta}}(\delta_x,\mu),
\eeqnn}
where $H$ is the optimal coupling measure of $(\delta_x,\mu)$; see for instance, Chapter 5 in \cite{chen}. Observing that $\lambda_1=-\frac{1}{2}(tr({b})-\sqrt{{ \Delta}}),$ we arrive at the conclusion by setting $\rho=\frac{1}{2}(tr({b})-\sqrt{{\Delta}})-\beta>0.$\qed

{\begin{cor}
Suppose that $\int_1^{\infty}\e^z{\nu(\d z)}<\infty,$ ${ \beta}<\frac{1}{2}(tr({b})-\sqrt{{ \Delta}}).$ Then there exist $\rho>0,\theta>0$ such that for any $x\in\mbb{R}_{+}^2$ and $t\geq0,$
$$
W_1( \delta_xQ_t,\delta_{0})\leq W_{d^{\theta}}(\delta_x,\delta_{0})\e^{-\rho t}.
$$
\end{cor}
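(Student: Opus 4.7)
The plan is to specialize the argument of Theorem 2.1 to the no-immigration setting. When $\psi\equiv 0$ the transition semigroup $\bar Q_t$ collapses to $Q_t$ and the random skew convolution semigroup $(\Upsilon^\xi_{r,t})$ vanishes. A natural first observation is that $\delta_{\mathbf{0}} Q_t = \delta_{\mathbf{0}}$ for every $t\ge 0$: from \eqref{191007q} one has $Q_{0,t}^\xi(\mathbf{0},\cdot)=\delta_{\mathbf{0}}$, hence $Q_t(\mathbf{0},\cdot)=\mathbf{P}[Q_{0,t}^\xi(\mathbf{0},\cdot)]=\delta_{\mathbf{0}}$. So it suffices to control $W_1(\delta_x Q_t,\delta_{\mathbf{0}} Q_t)$.

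Next I would invoke the coupling piece of Proposition \ref{prop3.2} stripped of its immigration component. The measure $Q_t(x,y,\mrm d\eta_1,\mrm d\eta_2):=\mbf{P}Q^\xi_t(x,y,\mrm d\eta_1,\mrm d\eta_2)$ defined in \eqref{branch} is a coupling of $Q_t(x,\cdot)$ and $Q_t(y,\cdot)$, and the same chain of inequalities as in \eqref{12801}, but using only the $Q_t^\xi$ factors (since $\Upsilon^\xi_{0,t}$ is absent when $\psi\equiv 0$), yields
$$
W_1(\delta_x Q_t,\delta_y Q_t)\le\sum_{i=1}^2|x_i-y_i|\pi_i(0,t).
$$
Note that the hypothesis $\int_{\|z\|\ge 1}\|z\|n(\mrm d z)<\infty$ used in Proposition \ref{prop3.2} is only needed to justify the immigration-dependent moment computation, which does not appear here, so it can be dropped.

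Under the assumption $\int_1^\infty e^z\nu(\mrm d z)<\infty$, Lemma 3.1 gives $\pi(0,t)=e^{\beta t}\pi'(0,t)$. The case analysis on the sign of $\epsilon=\sqrt{\Delta}+b_{22}-b_{11}+2b_{21}$ carried out in the proof of Theorem 2.1 produces a $\theta>0$ with
$$
\sum_{i=1}^2|x_i-y_i|\pi'_i(0,t)\le d^\theta(x,y)\,e^{\lambda_1 t},\qquad \lambda_1=-\tfrac{1}{2}\bigl(tr(b)-\sqrt{\Delta}\bigr).
$$
Combining these two displays and setting $y=\mathbf{0}$ gives
$$
W_1(\delta_x Q_t,\delta_{\mathbf{0}})=W_1(\delta_x Q_t,\delta_{\mathbf{0}} Q_t)\le e^{(\beta+\lambda_1)t}\,W_{d^\theta}(\delta_x,\delta_{\mathbf{0}})=e^{-\rho t}\,W_{d^\theta}(\delta_x,\delta_{\mathbf{0}}),
$$
with $\rho:=\frac{1}{2}(tr(b)-\sqrt{\Delta})-\beta>0$ by hypothesis.

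There is no real obstacle; the only mildly delicate point is verifying that dropping the immigration term is legitimate both in the coupling construction of Proposition \ref{prop3.2} and in the moment identity that precedes it. Since the immigration mechanism does not enter the model at all when $\psi\equiv 0$, the integrability condition on $n$ becomes vacuous and both constructions specialize cleanly. Everything else is a direct reuse of Lemma 3.1 and of the matrix computation of $\pi'(0,t)$ performed inside the proof of Theorem 2.1.
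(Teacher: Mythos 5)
Your proof is correct and follows exactly the route the paper intends: the corollary is Theorem 2.1 with $\psi\equiv 0$, in which case the limiting distribution is $\delta_{\mathbf 0}$ (as the paper notes after \eqref{q}), the condition $\int_{\|z\|\ge 1}\|z\|\,n(\d z)<\infty$ is vacuous, and the coupling reduces to the pure-branching estimate with $y=\mathbf 0$. The only cosmetic simplification you could make is to observe that when $\mu=\delta_{\mathbf 0}$ the optimal coupling $H$ is just $\delta_x\otimes\delta_{\mathbf 0}$, so the convexity step is unnecessary; but this does not change anything of substance.
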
}
\subsection{ Ergodicity in the total variance distance}\label{subsec22}

In this section, we prove the ergodicity in the total variance distance. The key to the proof is the finiteness of the random cumulant semigroup. And we deal with it using the tool of Dawson-Watanabe superprocess. For the sake of completeness, we make a brief introduction to the inhomogeneous superprocesses, which is mainly summerized from \cite{li}.

 Suppose that $\tilde E$ is a Borel subset of $I \times F,$ where $I \subset \mathbb{R}_{+}$ is an interval and $F$ is a Lusin topological space.  $(P_{r, t}: t \geq r \in I)$ is an inhomogeneous Borel right transition semigroup with global state space $\tilde{E}$. The system $\Pi=(\Omega, \mathscr{F}, \mathscr{F}_{r, t}, \Pi_{t}, \mathbf{P}_{r, x})$ is a right continuous inhomogeneous Markov process realizing $(P_{r, t}: t \geq r \in I)$. For any $t \in I$ let $I_{t}=[0, t] \cap I$ and $E_{t}=\{x \in F:(t, x) \in$ $\tilde{E}\}$. According to Theorem 6.10 in \cite{li}, for every $t \in I$ and $f \in B(E_{t})^{+},$ there is a unique bounded positive solution $(r, x) \mapsto v_{r, t}(x)=V_{r, t} f(x)$ to the integral equation
\begin{equation}\label{liinhomo}
v_{r, t}(x)=\mathbf{P}_{r, x}[f(\Pi_{t})]-\int_{r}^{t} \mathbf{P}_{r, x}[\Phi(s, \Pi_{s}, v_{s, t})] \d s, \quad r \in I_{t}, x \in E_{r}.
\end{equation}
In the equation, $\Phi$ is an inhomogeneous branching mechanism defined as
\begin{eqnarray}\label{eqPhi}
\begin{aligned}
\Phi(s, x, f)=& b(s, x) f(x)+c(s, x) f(x)^{2}-\int_{E_{s}} f(y) g(s, x, \d y) \\
&+\int_{M(E_{s})^{\circ}}[\mathrm{e}^{-\nu(f)}-1+\nu( f)] H(s, x, \d \nu), (s, x) \in \tilde{E},f \in B(E_{s})^{+},
\end{aligned}
\end{eqnarray}
where $b \in B(E)$ and $c \in B(E)^{+},$ $g(s, x, \d y)$ is a bounded kernel on $\tilde E.$ And $H(s, x, \d \nu)$ is a $\sigma$-finite kernel from $\tilde{E}$ to $M(\tilde{E})^{\circ} ,$ where $M(\tilde E)$ denotes the space of finite Borel measures on $\tilde E.$ For every $(s, x) \in \tilde{E}$ we assume $g(s, x, \d y)$ is supported by $\{s\} \times E_{s}$ and $H(s, x, \d \nu)$ is supposed by $M(\{s\} \times E_{s})^{\circ}$. Then $g(s, x, \d y)$ and $H(s, x, \d \nu)$ can be seen as measures on $E_{s}$ and $M(E_{s})^{\circ}, $ respectively. In addition, we assume
\begin{eqnarray*}
&\sup _{(s, x) \in \tilde{E}}\bigg[\vert b(s, x)\vert+c(s, x)+g(s, x, E_{s}) \\
&\qquad\qquad\qquad\qquad+\int_{M(E_{s})^{\circ}}[\nu(1) \wedge \nu(1)^{2}+\nu(1)] H(s, x, \d \nu)\bigg]<\infty,
\end{eqnarray*}
where $\nu_{x}(\d y)$ denotes the restriction of $\nu(\d y)$ to $E_{s} \backslash\{x\}. $
 A Dawson-Watanabe superprocess with spatial motion $\Pi$ and branching mechanism $\Phi$ is a Markov process in $M(E)$ with transition semigroup $(Q_{r,t})_{t \geq r\geq 0},$ defined by
$$
\int_{M(E_{t})} \mathrm{e}^{-\nu(f)} Q_{r, t}(\mu, \d \nu)=\exp \{-\mu(V_{r, t} f)\}, \quad f \in B(E_{t})^{+} .
$$
  In this case, we can rewrite \eqref{liinhomo} into
\begin{equation}\label{eqp}
v_{r,t}(x)=P_{r,t} f(x)-\int_{0}^{t} \d s \int_{E} \Phi(s,y, v_{s,t}) P_{s,t}(x, \d y), \quad x \in E, t\geq r \geq 0.
\end{equation}
Following by similar arguments to the proof of Corollary 5.18 in \cite{li} in inhomogeneous setting, we get the comparison theorem.
 \begin{prop}\label{com}
 Suppose that $\Phi_{1}$ and $\Phi_{2}$ are two branching mechanisms given by \eqref{eqPhi} satisfying $\Phi_{1}(x, f) \geq \Phi_{2}(x, f)$ for all $x \in E$ and $f \in B(E)^{+} .$ Let $(t, x) \mapsto v_{i}(t, x)$ be the solution of \eqref{eqp} with $\Phi$ replaced by $\Phi_{i} .$ Then $v_{1}(t, x) \leq$ $v_{2}(t, x)$ for all $t \geq 0$ and $x \in E$.
\end{prop}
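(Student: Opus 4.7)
The plan is to adapt the proof of Corollary 5.18 in \cite{li} to the inhomogeneous setting, the only new ingredient being that the time-homogeneous semigroup is replaced by the inhomogeneous transition probabilities $(\mathbf{P}_{r,x})$ of $\Pi$. The proof then reduces to a linearisation of the difference combined with a backward Gronwall argument.

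Setting $w(r,x):=v_{1}(r,x)-v_{2}(r,x)$ and subtracting the two copies of \eqref{liinhomo}, after inserting the pivot $\Phi_{2}(s,\Pi_{s},v_{1})$ inside the integrand, one arrives at
\begin{equation*}
w(r,x)=-\int_{r}^{t}\mathbf{P}_{r,x}\bigl[(\Phi_{1}-\Phi_{2})(s,\Pi_{s},v_{1})\bigr]\d s-\int_{r}^{t}\mathbf{P}_{r,x}\bigl[\Phi_{2}(s,\Pi_{s},v_{1})-\Phi_{2}(s,\Pi_{s},v_{2})\bigr]\d s,
\end{equation*}
in which the first term is $\leq 0$ by the hypothesis $\Phi_{1}\geq\Phi_{2}$. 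The second integrand is linearised by applying the mean value theorem to each piece of \eqref{eqPhi}: the drift contributes $b(s,y)w(s,y)$, the quadratic contributes $c(s,y)(v_{1}+v_{2})w(s,y)$, the kernel $g$ contributes $-\int_{E_{s}}w(s,z)g(s,y,\d z)$, and the L\'evy term yields $\int_{M(E_{s})^{\circ}}\nu(w)\bigl(1-\int_{0}^{1}e^{-\nu(v_{2}+\theta w)}\d\theta\bigr)H(s,y,\d\nu)$. The structural bounds stated below \eqref{eqPhi} together with the a priori $L^{\infty}$-boundedness of $v_{1},v_{2}$ (Theorem 6.10 of \cite{li}) make this a bounded linear operator acting on $w(s,\cdot)$ whose norm is uniformly bounded in $(s,y)$ by some $K<\infty$.

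Substituting the linearisation back into the Volterra equation for $-w$, which now carries the non-negative source $(\Phi_{1}-\Phi_{2})(\cdot,v_{1})$, and iterating, one obtains $-w\geq 0$; concretely, plugging the pointwise linear bound into the integrand and running a backward Gronwall argument (with terminal condition $w(t,\cdot)\equiv 0$) forces $w\leq 0$ everywhere, which is the desired comparison.

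The main technical obstacle is the uniform control of the L\'evy contribution $\nu(w)\bigl(1-\int_{0}^{1}e^{-\nu(v_{2}+\theta w)}\d\theta\bigr)$ against $H(s,y,\d\nu)$, since the naive estimate would demand $H$-integrability of $\nu(1)^{2}$, which is not globally assumed. The standard remedy, which carries over unchanged from \cite{li}, is to split the $H$-integral according to $\nu(1)\leq 1$ and $\nu(1)>1$: on the former region one uses $1-e^{-\nu(v)}\leq\nu(v)\leq(\|v_{1}\|_{\infty}\vee\|v_{2}\|_{\infty})\nu(1)$ combined with the $H$-integrability of $\nu(1)^{2}$, while on the latter region one applies the trivial bound $1-e^{-\nu(v)}\leq 1$ together with the $H$-integrability of $\nu(1)$. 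Both bounds follow from the hypothesis $\int[\nu(1)\wedge\nu(1)^{2}+\nu(1)]H<\infty$ stated below \eqref{eqPhi}, and once they are in hand the Gronwall argument runs in parallel with the homogeneous case.
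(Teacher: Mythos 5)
Your linearisation is carried out correctly (the drift, quadratic, kernel and L\'evy pieces are all identified with the right structure, and your split of the $H$-integral over $\{\nu(1)\leq 1\}$ and $\{\nu(1)>1\}$ is the right way to see that the resulting coefficient is uniformly bounded). The gap is in the last step, where you pass from the linear Volterra equation
$-w(r,x)=A(r,x)-\int_r^t\mathbf{P}_{r,x}[L_s(-w)(s,\cdot)]\,\d s$ with $A\geq 0$ to the conclusion $-w\geq 0$ ``by iterating / Gronwall.'' Gronwall controls $\|w\|$ but says nothing about the sign of $w$, and the iteration $u^{(0)}=0,\ u^{(n+1)}=A-\int\mathbf{P}[L_s u^{(n)}]$ does \emph{not} preserve nonnegativity, because $L_s$ is not a positive operator: the drift coefficient $b(s,y)$ is sign-indefinite, and the kernel part enters with the ``wrong'' sign, $-\int w\,g$. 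So the series $\sum_n (-\int\mathbf{P}L)^n A$ has sign-alternating terms and nothing in your write-up shows that its sum is nonnegative. (It actually can be, thanks to the special structure $A(r,x)=\int_r^t\mathbf{P}_{r,x}[a(s,\Pi_s)]\,\d s$ with $a\geq 0$ rather than $A$ being an arbitrary nonnegative function, but that requires an argument you have not supplied.)

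What makes Li's Corollary 5.18 (and the standard proofs of such comparison theorems) go through is not a linearise-and-Gronwall argument but a \emph{monotone iteration}: one first shifts the branching mechanism, replacing $\Phi_i$ by $\Phi_i^\gamma(s,x,f)=\Phi_i(s,x,f)+\gamma f(x)$ and \eqref{eqp} by its exponentially weighted version with kernel $\e^{-\gamma(s-r)}\mathbf{P}_{r,x}$, choosing $\gamma$ large enough (using the uniform bounds below \eqref{eqPhi} together with the a priori bound on $\|v_i\|_\infty$) that the map $f\mapsto\gamma f(x)-\Phi_i(s,x,f)$ is order-preserving on the relevant bounded set of $f$'s. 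With that fix, the Picard iterates starting from $v^{(0)}\equiv 0$ are nondecreasing in $n$, converge to the unique solution of \eqref{eqp}, and satisfy $v_1^{(n)}\leq v_2^{(n)}$ by induction precisely because $\gamma f-\Phi_1(f)\leq\gamma f-\Phi_2(f)$. If you wish to stay with your linearisation you would need an analogous device (a Feynman--Kac transform by the local coefficient, together with a separate treatment of the nonlocal piece) to reduce to a situation in which the iterated operator is order-preserving; as written, that reduction is missing, and the ``Gronwall forces $w\leq 0$'' step does not stand on its own.
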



In our model of two-type continuous-state branching processes in L\'{e}vy random environments, $E=\{1,2\},$ and the parameters in \eqref{eqPhi} takes the particular form
\begin{eqnarray}\label{g}
b(1)\ar=\ar b_{11}, g(s,1,f)=\int_{\{1,2\}}f(y)g(s,1,\d y)=-b_{12}f(2),\cr
b(2)\ar=\ar b_{22}, g(s,2,f)=\int_{\{1,2\}}f(y)g(s,2,\d y)=-b_{21}f(1).
\end{eqnarray}
Since $g$ is homogeneous here, we write $g(s,i,f)=g(i,f)$ for simplicity.
Furthermore, take $\Phi(x,s,f)=\e^{\xi(s)}\phi_x(e^{-\xi(s)},f),$ and $\Pi(t)\equiv\Pi(0),t\in I,$ we get \eqref{191006u}. Note that the domain of $f$ is exactly $\{1,2\},$ so we sometimes use $\lambda\in\mathbb{R}_+^2$ instead of $f$ by letting $\lambda_i=f(i),i=1,2.$

Define another branching mechanism $\tilde\phi=(\tilde\phi_1,\tilde\phi_2)$ as a function from $\mathbb{R}_+^2$ to itself with the following representations,
\begin{equation}\label{tilphi 1'}
 \tilde\phi_1(\lambda) = b_{11}\lambda_1 + b_{12}\lambda_2 + c_1 \lambda_1^2 + \int_{\mathbb{R}_+^2} (\e^{-z_1\lambda_1} - 1 + z_1\lambda_1)m_1(\d z),
 \end{equation}
 \begin{equation}\label{tilphi 2'}
 \tilde\phi_2(\lambda) = b_{21}\lambda_1 + b_{22}\lambda_2 + c_2 \lambda_2^2 + \int_{\mathbb{R}_+^2} (\e^{-z_2\lambda_2} - 1 + z_2\lambda_2)m_2(\d z).
 \end{equation}
 Similarly, there exists~$r\mapsto \tilde U_{r,t}(\xi,\lambda)\in\mathbb{R}_+^2$~as the unique positive strong solution to
 \begin{equation}\label{u}
   \tilde U_{r,t}^{(i)}(\xi,\lambda) = \lambda_i - \int_r^t \e^{\xi(s)}\tilde\phi_i(e^{-\xi(s)}\tilde U_{s,t}(\xi,\lambda))\d s,\quad i=1,2,\quad r\in I\cap (-\infty,t].
 \end{equation}

 The\emph{ local projection} of $\phi$ defined by \eqref{phi 1'}-\eqref{phi 2'} is the function $\phi^*$ from $\mathbb R_+$ to $\mathbb R_+^2$ defined by
  \begin{equation}\label{staphi 1'}
\phi_1^*(x) = (b_{11} + b_{12})x + c_1 x^2 + \int_{\mathbb{R}_+^2} (\e^{-x z_1} - 1 + x z_1)m_1(\d z),
 \end{equation}
 \begin{equation}\label{staphi 2'}
\phi_2^*(x) = (b_{21} + b_{22})x + c_2 x^2 + \int_{\mathbb{R}_+^2} (\e^{-x z_2} - 1 + x z_2)m_2(\d z).
 \end{equation}

  \begin{cond}\label{cond}
 For $x=1,2,$ $\phi^*_x\geq \varphi(z),$ where $\varphi$ is a branching mechanism of a single-type continuous-state continuous-time branching process in L\'evy random environment satisfying $\int^{\infty}\varphi(z)^{-1}\d z<\infty.$ The corresponding random cumulant semigroup is $w_{r,t}$ given by
 $$ w_{r, t}(\xi, \lambda)=\lambda-\int_{r}^{t} \mathrm{e}^{\xi(s)} \varphi(e^{-\xi(s)} w_{s, t}(\xi, \lambda)) \d s,\quad r \in I \cap(-\infty, t].$$
 \end{cond}
 \begin{thm}\label{0725}
 Suppose that Condition \ref{cond} is satisfied. Then
 $$ u_{r,t}^{(i)}(\xi,\lambda)\leq w_{r,t}(\xi,\|\lambda\|), \quad\textit{for any}\ i=1,2,\lambda\in\mathbb{R}_+^2. $$ Furthermore, if  $\liminf\limits_{t\rightarrow\infty}\xi(t)=-\infty,$ then $\lim\limits_{t\rightarrow\infty}\|\bar{v}_{0, t}^{\xi}\|=0,\mathbf{P}-a.s.,$ where $u_{r,t}$ and $v_{r,t}$ are defined by \eqref{191006u} and \eqref{191006v}, $\bar{v}_{0, t}^{\xi}:=\lim\limits_{\lambda\rightarrow\infty}v_{0,t}(\xi,\lambda).$
 \end{thm}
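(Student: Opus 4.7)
My plan is to handle the two assertions separately; both reduce to control by the one-dimensional CBRE governed by $\varphi$. Throughout I write $\|\lambda\| = \lambda_1 \vee \lambda_2$ and $\mathbf{1} = (1,1)$.

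For the inequality $u_{r,t}^{(i)}(\xi,\lambda) \le w_{r,t}(\xi, \|\lambda\|)$, I first invoke the monotonicity of $u^{(i)}_{r,t}(\xi,\cdot)$ in each coordinate of $\lambda$, a standard property of the Laplace cumulant of a nonnegative Markov process that is also derivable from Proposition \ref{com}. Since $\lambda \le \|\lambda\|\mathbf{1}$ componentwise, this reduces the problem to the diagonal datum $\lambda = x\mathbf{1}$ with $x = \|\lambda\|$, so it suffices to show $M(r) := \max_i u^{(i)}_{r,t}(\xi, x\mathbf{1}) \le W(r) := w_{r,t}(\xi, x)$ for $r \le t$.

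The diagonal comparison rests on three ingredients. First, differentiating \eqref{phi 1'}--\eqref{phi 2'} and using \eqref{1213-1} gives $\partial_{\lambda_j}\phi_i(\lambda) = b_{ij} + \int z_j(1-\e^{-\langle\lambda,z\rangle})m_i(\d z) \le b_{ij} + \int z_j\,m_i(\d z) \le 0$ for $j \ne i$, so $\phi_i$ is non-increasing in its off-diagonal variable. Second, since $f(u) := \e^{-u}-1+u$ is non-decreasing on $[0,\infty)$ and $z_1+z_2 \ge z_i$, a pointwise comparison of the integrands in \eqref{phi 1'}--\eqref{phi 2'} with \eqref{staphi 1'}--\eqref{staphi 2'} yields $\phi_i(x,x) \ge \phi_i^*(x)$. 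Third, Condition \ref{cond} gives $\phi_i^*(x) \ge \varphi(x)$. Letting $i^*$ denote the argmax of $(u^{(1)}, u^{(2)})$ at time $r$ and using $u^{(j)}_{r,t} \le M(r)$, these chain to $\phi_{i^*}(\e^{-\xi(r)} u_{r,t}) \ge \phi_{i^*}(\e^{-\xi(r)} M(r), \e^{-\xi(r)} M(r)) \ge \varphi(\e^{-\xi(r)} M(r))$, giving the one-sided differential inequality $M'(r) \ge \e^{\xi(r)}\varphi(\e^{-\xi(r)} M(r))$ almost everywhere (here $M$ is locally Lipschitz as the maximum of two $C^1$ functions). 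A first-crossing argument exploiting the monotonicity of the one-dimensional branching mechanism $\varphi$ then rules out any $r_0 < t$ with $M(r_0) > W(r_0)$: on any maximal subinterval where $M > W$, one has $M' \ge W'$ almost everywhere, so $M - W$ is non-decreasing there, contradicting $M(t) = W(t) = x$ by absolute continuity.

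For the convergence, \eqref{191006v} and $\xi(0) = 0$ give $v^{(i)}_{0,t}(\xi,\lambda) = u^{(i)}_{0,t}(\xi, \e^{\xi(t)}\lambda)$, so letting $\lambda \to \infty$ componentwise identifies $\bar v^\xi_{0,t}$ with the vector $(\lim_{\lambda\to\infty} u^{(i)}_{0,t}(\xi,\lambda))_{i=1,2}$. Passing to this limit in the first assertion yields $\|\bar v^\xi_{0,t}\| \le \bar w_{0,t}(\xi) := \lim_{\mu\to\infty} w_{0,t}(\xi,\mu)$. The scalar $\bar w_{0,t}(\xi)$ is the extinction cumulant of the one-dimensional CBRE with mechanism $\varphi$ in environment $\xi$: Grey's condition $\int^\infty \varphi(z)^{-1}\,\d z < \infty$ ensures it is finite, and the classical CBRE extinction criterion (Grey's theorem adapted to L\'evy random environments, cf.\ \cite{helixu}, \cite{jizheng}) combined with $\liminf_{t\to\infty}\xi(t) = -\infty$ gives $\bar w_{0,t}(\xi) \to 0$ almost surely as $t \to \infty$, hence $\|\bar v^\xi_{0,t}\| \to 0$ a.s. The main technical obstacle is the non-smoothness of $M(r)$ at argmax-switching times, handled by its a.e.\ differentiability and the integral form of the first-crossing argument; a secondary concern is citing the one-dimensional extinction statement cleanly under the stated hypotheses, which is essentially classical but may warrant a brief verification by direct comparison of the scalar cumulant equation with its deterministic analogue in regimes where $\xi(t)$ is very negative.
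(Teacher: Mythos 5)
Your proof is correct, but it follows a genuinely different route from the paper's. The paper first replaces $\phi$ by the ``decoupled'' mechanism $\tilde\phi$ given by \eqref{tilphi 1'}--\eqref{tilphi 2'} (so $\tilde\phi_i\le\phi_i$, whence $u^{(i)}_{r,t}\le\tilde U^{(i)}_{r,t}$), then realizes $\tilde U_{r,t}$ as the cumulant semigroup of a two-point Dawson--Watanabe superprocess with local branching mechanism $\phi^*$ and a nonlocal spatial motion $\tilde P_{r,t}$ built from the off-diagonal kernel $g$ in \eqref{g}, and finally invokes the superprocess comparison theorem (Proposition~\ref{com}) to bound $\tilde U_{r,t}\|f\|$ by $w_{r,t}(\|f\|)$. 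Your argument bypasses the superprocess machinery entirely: after reducing to the diagonal datum $\lambda=\|\lambda\|\mathbf{1}$ via monotonicity of the cumulant, you establish the scalar differential inequality $\partial_r M\ge\e^{\xi(r)}\varphi(\e^{-\xi(r)}M)$ directly from (i) the off-diagonal non-increase of $\phi_i$ coming from \eqref{1213-1}, (ii) the pointwise bound $\phi_i(x,x)\ge\phi^*_i(x)$, and (iii) Condition~\ref{cond}, and then conclude by first-crossing from $M(t)=W(t)$. The chain of pointwise inequalities in (i)--(iii) is exactly the content that the paper packages into the comparison of branching mechanisms $\phi\ge\tilde\phi$ and $\phi^*\ge\varphi$, so the two arguments are morally close; your version is more elementary and does not require constructing or analysing $\tilde P_{r,t}$, while the paper's version slots into the framework it has already set up and more readily suggests the higher-type generalization. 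Your treatment of the second assertion coincides with the paper's (both pass to $\lambda\to\infty$ and appeal to the one-dimensional CBRE extinction result from \cite{helixu}).

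One point that deserves a line of justification: in the first-crossing step you invoke ``monotonicity of the one-dimensional branching mechanism $\varphi$'' to conclude $M'\ge W'$ when $M>W$. Condition~\ref{cond} does not force $\varphi$ to be non-decreasing (a branching mechanism $\varphi(z)=bz+cz^2+\int(\e^{-zu}-1+zu)m(\d u)$ is non-decreasing iff $b\ge 0$), so the inequality $\varphi(\e^{-\xi}M)\ge\varphi(\e^{-\xi}W)$ is not automatic. The gap is easily closed: $\varphi$ is convex hence $C^1$ and locally Lipschitz on $[0,\infty)$, so writing $(M-W)'(r)\ge\varphi'(c(r))(M-W)(r)$ by the mean value theorem and multiplying by $\exp\{-\int_{r_0}^r\varphi'(c)\}$ gives a Gronwall-type conclusion that still contradicts $(M-W)(r_0)>0$ and $(M-W)(r_1)=0$. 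I would state the argument in this form rather than assuming $\varphi$ monotone.
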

 \begin{proof}
 By Proposition 2.9 in \cite{li}, equation\ \eqref{u} can be rewritten into
 \begin{equation*}\label{u12'}
   \tilde U_{r,t}^{(1)}(\xi,\lambda) = \e^{b_{12}(t-r)}\lambda_1 - \int_r^t \e^{b_{12}(s-r)}[\e^{\xi(s)}\tilde\phi_1(e^{-\xi(s)}\tilde U_{s,t}(\xi,\lambda))+b_{12}\tilde U_{s,t}^{(1)}(\xi,\lambda)]\d s,
 \end{equation*}
\begin{equation*}\label{u2'}
   \tilde U_{r,t}^{(2)}(\xi,\lambda) = \e^{b_{21}(t-r)}\lambda_2 - \int_r^t \e^{b_{21}(s-r)}[\e^{\xi(s)}\tilde\phi_2(e^{-\xi(s)}\tilde U_{s,t}(\xi,\lambda))+b_{21}\tilde U_{s,t}^{(2)}(\xi,\lambda)]\d s,
 \end{equation*}
   It is clear that
   \begin{equation*}\label{s1}
\tilde\phi_1(\lambda)=\phi_1^*(\lambda_1) - b_{12}\lambda_1 +b_{12}\lambda_2,
 \end{equation*}
 \begin{equation*}\label{s2}
\tilde\phi_2(\lambda)=\phi_2^*(\lambda_2) - b_{21}\lambda_2 +b_{21}\lambda_1.
 \end{equation*}
 Hence,
 \begin{eqnarray*}\label{u12'}
 \tilde U_{r,t}^{(1)}(\xi,\lambda) \ar=\ar \e^{b_{12}(t-r)}\lambda_1 - \int_r^t \e^{b_{12}(s-r)}[\e^{\xi(s)}\phi^*_1(e^{-\xi(s)}\tilde U^{(1)}_{s,t}(\xi,\lambda))+b_{12}\tilde U_{s,t}^{(2)}(\xi,\lambda)]\d s,\cr
   \tilde U_{r,t}^{(2)}(\xi,\lambda) \ar=\ar \e^{b_{21}(t-r)}\lambda_2 - \int_r^t \e^{b_{21}(s-r)}[\e^{\xi(s)}\phi^*_2(e^{-\xi(s)}\tilde U^{(2)}_{s,t}(\xi,\lambda))+b_{21}\tilde U_{s,t}^{(1)}(\xi,\lambda)]\d s.
 \end{eqnarray*}
Define $$P^g_{r,t}f(1)= \e^{b_{12}(t-r)}f(1),P^g_{r,t}f(2)= \e^{b_{21}(t-r)}f(2),$$ and use the relation $\lambda_i=f(i),i=1,2.,$ the above equations can be rewritten into
  \begin{eqnarray}\label{u11'}
   \tilde U_{r,t}^{(i)}(\xi,\lambda) \ar=\ar P_{r,t}^{g} f(i)- \int_r^t \d s\int_{\{1,2\}}[\e^{\xi(s)}\phi^*_1(e^{-\xi(s)}\tilde U^{(1)}_{s,t}(\xi,\lambda)) \cr
\ar\qquad\ar\qquad\qquad\qquad\qquad\qquad\qquad-g(i,\tilde U_{s,t}(\xi,\lambda))]P_{s,t}^g(i,\d y),
 \end{eqnarray}
Let $(\tilde P_{r,t}:t\geq r \geq 0)$ be an inhomogeneous Borel right semigroup defined  by
\begin{equation}\label{til}
\tilde{P}_{r,t} f(x)=P_{r,t}^{g} f(x)+\int_{r}^{t} \d s \int_{E} g(y,\tilde{P}_{s,t} f) P_{r,s}^{g}(x, \d y),
\end{equation}
where $g(s,y, f)$ is defined by \eqref{g}. We claim that the unique solution of\  \eqref{til} is given by
 \begin{equation}\label{duo1}
 \tilde{P}_{r,t} f(x)=P_{r,t}^{g} f(x)+\sum_{k=1}^{\infty} \int_{r}^{t} \d s_{1} \int_{s_{1}}^{t} \cdots \int_{s_{k-1}}^{t} P_{r, s_1}^{g} g P_{s_1, s_2}^{j} \cdots g P_{s_{k-1}, s_k}^{g}f(x) \d s_{k}.
 \end{equation}
Indeed, the $i$-th term of the series in\ \eqref{duo1} is bounded by $K_{r,t},$ where $K_{r,t}:=-[b_{12}(t-r)\e^{b_{12}(t-r)}\wedge b_{21}(t-r)\e^{b_{21}(t-r)}].$  Then the series converges uniformly on $[0,\infty)\times\{1,2\}.$ For the uniqueness of the solution, suppose that $(r,x)\mapsto z_{r,t}(x)$ is a locally bounded solution of\ \eqref{til} with $ z_{r,t}(x)\equiv 0$,
$$\| z_{r,t}\|\le K_{r,t} \sup_{r\le s\le t}\| z_{s,t}\|.$$
Notice that $K< 1,$ we have $\| z_{r,t}\|=0$ for $0\le r\le t.$ That gives the uniqueness and hence the unique solution of\  \eqref{til} is given by \eqref{duo1}.

 From equation \eqref{til} and equation \eqref{u11'} it follows that,
    \begin{eqnarray*}\label{uu}
  \tilde U_{r,t}(\xi,f)(1)\ar:= \ar \tilde U_{r,t}^{(i)}(\xi,f) \cr
  \ar=\ar \tilde P_{r,t}f(1) - \int_r^t \int_{\{1,2\}}[\e^{\xi(s)}\phi^*_1(e^{-\xi(s)}\tilde U^{(1)}_{s,t}(\xi,\lambda))\cr
\ar\quad\ar\qquad\qquad\qquad\qquad\qquad\qquad\qquad\qquad-g(i,\tilde U_{s,t}(\xi,\lambda))]P_{r,s}^g(i,\d y)\d s,
 \end{eqnarray*}
 Using the above relation successively,
 \begin{eqnarray}
\ar\ar\tilde U_{r,t}(\xi,f)(1)\cr
\ar=\ar\tilde{P}_{r, t} f(1)-\int_{r}^{t} P_{r,s_1}^{g}[e^{\xi(s_1)} \phi_{1}^{*}(e^{-\xi(s_1)} \tilde{U}_{s_1, t} f(1))] \d s_{1} \cr
\ar\quad\ar\qquad\qquad\qquad\qquad\qquad\qquad-\int_{r}^{t} \d s_{1} \int_{s_{1}}^{t} P_{r,s_1}^{g} g P_{s_1,s_2}^{g} g[\tilde{P}_{s_2, t} f-\tilde{U}_{s_2, t} f] \d s_{2}\cr
\ar=\ar\tilde{P}_{r, t} f(1)-\int_{r}^{t} P_{r,s_1}^{g}[e^{\xi(s_1)} \phi_{1}^{*}(e^{-\xi(s_1)} \tilde{U}_{s_1, t} f(1))] \d s_{1} \cr
\ar\quad\ar-\int_{r}^{t} \d s_{1} \int_{s_{1}}^{t} P_{r,s_1}^{g} g P_{s_1,s_2}^{g}[e^{\xi(s_{2})} \phi_{1}^{*}(e^{-\xi(s_{2})} \widetilde{U}_{s_{2}, t} f(1))] \d s_{2}\cr
\ar\quad\ar-\sum_{k=3}^{n} \int_{r}^{t} \d s_{1} \int_{s_{1}}^{t} \cdots \int_{s_{k-1}}^{t} P_{r,s_1}^{g} g P_{s_1,s_2}^{j} \cdots g P_{s_{k-1}, s_k}^{g}[e^{\xi(\xi_{k})} \phi_{1}^{*}(e^{-\xi(s_{k})} \tilde{U}_{s_{k}, t} f(1))] \d s_{k}\cr
\ar\quad\ar+\varepsilon_n(r,t,1),
 \end{eqnarray}
 where
 $$\varepsilon_{n}(r, t, 1)=\int_{r}^{t} \d s_{1} \int_{s_{1}}^{t} \cdots \int_{s_{k-1}}^{t} P_{r,s_1}^{g} g P_{s_1,s_{2}}^{g} \ldots g P_{s_{k-1},s_k}^{g} g[\tilde{P}_{s_{k}, t} f-\tilde{U}_{s_{k}, t} f] \d s_{k}.$$
 By Proposition 5.1 in \cite{qinzheng}, there exist constants $C_1>0,C_2>0,$ such that $\|\tilde U_{r,t} f\|\le C_1\| f\|\e^{-C_2(t-r)}.$
 Therefore,
 \begin{eqnarray*}
\left\|\varepsilon_{n}(r,t, \cdot)\right\| \ar \leq\ar(1+C_1\mathrm{e}^{-C_2( t-r)})\| f\|\| g(\cdot, 1)\|^{n} \int_{r}^{t} \d s_{1} \int_{{s_{1}}}^t \d s_{2} \cdots \int_{s_{n-1}}^t \d s_{n} \cr
\ar \leq\ar(1+C_1\mathrm{e}^{-C_2( t-r)})\|f\|\|g(\cdot, 1)\|^{n} \frac{(t-r)^{n}}{n !}.
 \end{eqnarray*}
Letting $n\rightarrow\infty$ and using \eqref{duo1} we get,
 $$\tilde{U}_{r,t} f(x)=\tilde{P}_{r,t} f(x)-\int_r^t\d s\int_{\{1,2\}}\mathrm{e}^{\xi(s)}\phi^*_x(y,\mathrm{e}^{-\xi(s)}\tilde U_{s,t}f(y))\tilde{P}_{s,t} (x,\d y),x=1,2. $$
 Hence, we may see $(\tilde U_{r,t})_{t\geq r\geq 0}$ as the cumulant semigroup of a Dawson-Watanabe superprocess with branching mechanism $\phi^*$ and underlying transition semigroup $(\tilde P_{r,t})_{t\geq r\geq 0}.$
By Proposition \ref{com} we have,
$$ \tilde U_{r,t}f(x)\leq \tilde U_{r,t}\|f\|(x)\leq w_{r,t}(\| f\|).$$
Similary, $u_{r,t}^{(i)}(\xi,\lambda)\le\tilde U_{r,t}^{(i)}(\xi,\lambda)$ for $i=1,2,\lambda\in\mathbb{R}_+^2,$ since $\tilde\phi_i(\lambda)\le\phi_i(\lambda).$

Furthermore, if  $\liminf\limits_{t\rightarrow\infty}\xi(t)=-\infty,$ according to Corollary 4.4 in \cite{helixu}, we have $\lim\limits_{t\rightarrow\infty}\lim\limits_{\lambda\rightarrow\infty}w_{r,t}(\lambda)=0, \mathbf{P}\text{-}a.s.$ Thus, $\lim\limits_{t\rightarrow\infty}\|\bar{v}_{0, t}^{\xi}\|=0,\mathbf{P}\text{-}a.s.$

\end{proof}

 We are now in a position to prove our second main result.

\noindent\textbf{Proof of Theorem 2.2} Because $\p[\xi(1)]<{ \beta}<\frac{1}{2}(tr({b})-\sqrt{{ \Delta}}),$ and $\frac{1}{2}(tr({b})-\sqrt{{ \Delta}})$ is the smallest eigenvalue of $b,$ we may adjust the parameters so that $\p[\xi(1)]<0,$ and the eigenvalues of $b$ are strictly positive. Thus, $\xi(t)\rightarrow\-\infty,$ as $t\rightarrow\infty.$ By Theorem \ref{0725}, $v_{r,t}(\xi,\lambda)<\infty, \mathbf{P}$-a.s. Thus, for any bounded Borel function $f$ on $\mathbb{R}_+^2$ and any $t\ge 0,$
\begin{eqnarray}
\vert\bar{Q}_tf(x)-\bar{Q}_tf(y)\vert\ar\le\ar\mathbf{P}\vert f(X_t(x))-f(X_t(y))\vert\mathbf{1}_{\{T(x,y)>t\}}\cr
\ar\le\ar 2\|f\|\mathbf{P}[T(x,y)>t]\cr
\ar=\ar 2\|f\|\mathbf{P}[1-\exp\{-\langle (y-x),\bar{v}_{0,t}(\xi)\rangle\}]\cr
\ar\le\ar 2\|f\|\mathbf{P}\mathrm{min}[1,\vert\langle (y-x),\bar{v}_{0,t}(\xi)\rangle\vert].
\end{eqnarray}
 Let $H$ be any coupling of $(\delta_x,\mu)$. By convexity of the Wasserstein distance, we obtain

\begin{eqnarray}
\|\delta_x\bar Q_t-\mu\|_{TV} \ar \leq\ar \int_{\mathbb{R}_+^2 }\|\bar{Q}_t(x,\cdot)-\bar{Q}_t(y,\cdot)\|_{TV} H(\d x, \d y) \cr
\ar\leq\ar 2\int_{\mathbb{R}_+^2 }\mathbf{P}[1-\exp\{-\langle (y-x),\bar{v}_{0,t}(\xi)\rangle\}]H(\d x, \d y).
\end{eqnarray}
Since $\liminf\limits_{t\rightarrow\infty}\xi(t)=-\infty,$ $\lim\limits_{t\rightarrow\infty}\|\bar{v}_{0, t}^{\xi}\|=0$ according to Theorem \ref{0725}. By dominated convergence,
$$\lim\limits_{t\rightarrow\infty}\|\delta_x\bar Q_t-\mu\|_{TV}=0.$$
By choosing $H$ as the optimal coupling of $(\delta_x,\mu)$ with respect to $W_{1}$ and using
$$
\int_{\mathbb{R}_+^2} \mathbb{P}[\mathrm{min} \{1,\vert x-y\vert\| \bar{v}_{0, t}^{\xi}\|\}] H(\d x, \d y) \leq \mathbb{P}[\|\bar{v}_{0, t}^{\xi}\|] \int_{\mathbb{R}_+^2}\vert x-y\vert H(\d x, \d y),
$$
we finish the proof.\qed
\bmhead{Acknowledgments}

The authors are greatly indebted to Prof. Zenghu Li for his valuable suggestions and for the guidance over the past years. 
\section*{Declarations}
\begin{itemize}
\item This work was supported by the National Natural Science Foundation of China (No. 11531001) and the Education and Scientific Research Project for Young and Middle-aged Teachers in Fujian Province of China (No. JAT200072).
\item The authors have no conflicts of interest to declare. All co-authors have seen and agree with the contents of the manuscript.
\item  The data that support the findings of this study are available on request from the corresponding author.
\end{itemize}

\end{document}